\let\emphorig\emph
\renewcommand{\emph}[1]{\emphorig{#1}\index{#1}}
\newtheoremstyle{mes_theoremes}{1.5em}{2em}{}{}{\bfseries}{~:~}{\parskip}{\thmname{#1}\thmnumber{ #2}\thmnote{ (#3)}}
\theoremstyle{mes_theoremes} 
\newtheorem{theo}{Theorem}[section]
\newtheorem{theo*}{Theorem}
\newtheorem{prop}[theo]{Proposition}
\newtheorem{cor}[theo]{Corollary}
\newtheorem{rem}[theo]{Remark}
\newtheorem*{ex*}{Example}
\newtheorem{ex}[theo]{Example}
\newcommand{\flo}[1]{\todo[size=\tiny,color=green!30,inline]{#1}}
\newcommand{\etienne}[2][]{\todo[size=\tiny,color=red!30,inline,#1]{#2}}
\newcommand{\Q}{\mathbb{Q}}
\newcommand{\C}{\mathbb{C}}
\newcommand{\isom}{\simeq}
\newcommand\rd{\Yfillcolour{red}}
\newcommand\wt{\Yfillcolour{white}}
\begin{document}

\title{Splitting the square of homogeneous and elementary functions into their symmetric and anti-symmetric parts}

\author{Florence Maas-Gariépy, Étienne Tétreault }

\thispagestyle{empty}        
\maketitle


\section{Introduction}

The composition of polynomial representations of $GL_n(\C)$ translate in terms of characters to the plethysm of symmetric functions. Although introduced in 1936 by Littlewood \cite{Littlewood}, the plethysm of two symmetric functions $f$ and $g$, denoted $f[g]$, still carries a lot of open problems.

One open problem is understanding the decomposition of $f[g]$ in the basis of Schur functions $s_\nu$, since they correspond to the irreducible characters for polynomial representations of $GL_n(\mathbb{C})$. Basic properties of plethysm 
reduce the  
problem (slightly) to understanding the decomposition of $f$, and of 
$s_\mu[g]$, in the Schur basis. 

This has applications in representation theory (and so chemistry, physics, crystallography, etc.), and in geometric complexity theory (see \cite{Mulmuley} for an introduction).

For a specific symmetric function $g$, one wants 
to study 
$g^n$, which decomposes as a sum of plethysms $s_\mu[g]$ for $\mu\vdash n$, each with multiplicity $f^\mu$ counting standard tableaux of shape $\mu$.
%
Often, we also know the decomposition of $g^n$ 
in the basis of Schur functions, so the real difficulty is to discriminate how many copies of $s_\lambda$ contribute to a given plethysm $s_\mu[g]$. For small 
values of $n$, this is computable, but rather slow as the calculation of the coefficients appearing in these decompositions has been proven to be $\# P$-hard \cite{FischerIkenmeyer}.

In the case $n=2$ that interests us, the plethysms $s_2[g]$ and $s_{11}[g]$ in $g^2$ are often referred to as the symmetric and anti-symmetric parts of the square. 

For $g=s_\lambda$ a Schur function, 
Carré and Leclerc \cite{CarreLeclerc} gave a combinatorial solution to this problem. 
The decomposition of the plethysms of $s_\lambda^n$ in the basis of Schur function remains however open for $n>2$.

Our contribution is to give a combinatorial description 
in the case of two other linear bases of symmetric functions, namely the complete homogeneous symmetric functions $h_{\lambda}$ and the elementary symmetric functions $e_{\lambda}$. As the construction for both is similar, we first describe our construction for $h_\lambda$ and 
extended it to $e_\lambda$ afterwards. We use a combinatorial description of $h_\lambda$ in terms of 
$\lambda$-tuples of row tableaux (corresponding to tabloids), and use the RSK algorithm to describe the $Q$-tableaux 
indexing the copies of $s_\nu$ appearing in the decomposition of $h_\lambda^2$ in the basis of Schur functions. A sign statistic on these $Q$-tableaux determines to which plethysm the associated Schur function contributes. Our description of the RSK algorithm uses the product of tableaux $\ast$ of Lascoux and Schützenberger \cite{LascouxSchutzenberger}, formalized by Fulton in \cite{Fulton}, and rectification (Rect) of skew tableaux using jeu de taquin. Our proofs rely on this description, as well as basic properties of plethysm.

Our main results are the following. For conciseness, we combined here the analogous results for $h_\lambda$ and $e_\lambda$.

\begin{theo*}
Fix a partition $\lambda = (\lambda_1, \ldots, \lambda_\ell)$.

\begin{itemize}
\item Let $Q$ be a tableau of shape $\nu\vdash 2|\lambda|$ and content $\lambda^2$. 
4Let $Q^{(i)}$ be the subtableau of $Q$ containing entries $2i-1$ and $2i$, for $1\leq i \leq \ell$ and $Q_i = \text{Rect}(Q^{(i)})$ of shape $(2\lambda_i-j_i, j_i)$. Define the sign of $Q$ to be $\text{sign}(Q) = \prod_{i=1}^{\ell} (-1)^{j_i}$.\\

The copy of $s_\nu$ indexed by $Q$ in $h_\lambda^2$ lies in $s_2[h_\lambda]$ if $\text{sign}(Q) = 1$, and in $s_{11}[h_\lambda]$ if $\text{sign}(Q) = -1$.

\item Let $\widetilde{Q}$ be a conjugate tableau of shape $\nu\vdash 2|\lambda|$ and content $\lambda^2$. 
Let $\widetilde{Q^{(i)}}$ be the subtableau of $\widetilde{Q}$ containing entries $2i-1$ and $2i$, for $1\leq i \leq \ell$ and $\widetilde{Q_i} = (\text{Rect}(\widetilde{Q^{(i)}}'))'$ of shape $(2^{\lambda_i-j_i}, 1^{2j_i})$. Define the (conjugate) sign of $\widetilde{Q}$ to be $\text{sign}_c(\widetilde{Q}) = \prod_{i=1}^{\ell} (-1)^{j_i}$.\\

The copy of $s_\nu$ indexed by $\widetilde{Q}$ in $e_\lambda^2$ lies in $s_2[e_\lambda]$ if $\text{sign}_c(\widetilde{Q}) = 1$, and in $s_{11}[e_\lambda]$ if $\text{sign}_c(\widetilde{Q}) = -1$.
\end{itemize}
\label{thm:signTableauQ}
\end{theo*}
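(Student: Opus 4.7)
The plan is to exploit the plethysm $p_2[f] = s_2[f] - s_{11}[f]$, which being a $\Q$-algebra endomorphism reduces the computation to a multiplicative expansion, and then to match the resulting sign pattern with $\text{sign}(Q)$. The identity $s_2 - s_{11} = p_2$ is immediate from $h_2 = \tfrac{1}{2}(p_1^2 + p_2)$ and $e_2 = \tfrac{1}{2}(p_1^2 - p_2)$, so $s_2[f] - s_{11}[f] = p_2[f]$ for every symmetric $f$. Combined with $h_\lambda^2 = s_2[h_\lambda] + s_{11}[h_\lambda]$, comparing the two decompositions obtained from $h_\lambda^2 \pm p_2[h_\lambda]$ then sorts each copy of $s_\nu$ into the correct plethystic summand.

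For $h_\lambda$, since $h_m = s_m$, the classical two-row plethysm formulas $s_2[s_m] = \sum_{j \text{ even}} s_{(2m-j,j)}$ and $s_{11}[s_m] = \sum_{j \text{ odd}} s_{(2m-j,j)}$ give $p_2[h_m] = \sum_{j=0}^{m} (-1)^j\, s_{(2m-j,\,j)}$. Multiplicativity of $p_2$ then yields
\[
p_2[h_\lambda] \;=\; \prod_{i=1}^{\ell} p_2[h_{\lambda_i}] \;=\; \sum_{(j_1,\ldots,j_\ell)} \left(\prod_{i=1}^\ell (-1)^{j_i}\right) \prod_{i=1}^{\ell} s_{(2\lambda_i-j_i,\, j_i)}.
\]
To identify the right-hand side with a sum over the $Q$-tableaux, I would invoke the paper's earlier description of $h_\lambda^2 = \prod_i h_{\lambda_i}^2$ via the Lascoux--Schützenberger $\ast$-product and RSK: each copy of $s_\nu$ in $h_\lambda^2$ is indexed by an SSYT $Q$ of shape $\nu$ and content $\lambda^2$, and $Q^{(i)}$ records exactly which Pieri summand $s_{(2\lambda_i - j_i, j_i)}$ of $h_{\lambda_i}^2$ contributed, since $\text{Rect}(Q^{(i)})$ is the unique SSYT of content $(\lambda_i,\lambda_i)$ with shape $(2\lambda_i - j_i, j_i)$. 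Consequently, the coefficient of the $Q$-indexed $s_\nu$ in $p_2[h_\lambda]$ is $\prod_i (-1)^{j_i} = \text{sign}(Q)$, and comparing with $h_\lambda^2 = s_2[h_\lambda] + s_{11}[h_\lambda]$ gives the first assertion.

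The $e_\lambda$ case is analogous. A direct computation, either via conjugate Pieri on $s_{1^m}^2$ or via the $\omega$-identity $\omega(p_2[f]) = (-1)^{|f|}\, p_2[\omega(f)]$ applied to $f = h_m$, gives $p_2[e_m] = \sum_{j=0}^{m} (-1)^j\, s_{(2^{m-j},\,1^{2j})}$, with sign $(-1)^j$ independently of the parity of $m$. Multiplicativity of $p_2$ together with the paper's indexation of $e_\lambda^2$ by conjugate tableaux $\widetilde{Q}$, whose subtableaux (after transposing, rectifying, and re-transposing) have shape $(2^{\lambda_i - j_i}, 1^{2j_i})$, gives the $e_\lambda$ statement by the same sign-matching argument.

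The hard part is the combinatorial identification underlying both cases: verifying that the shape of $\text{Rect}(Q^{(i)})$ (respectively its conjugate analogue) faithfully records which Pieri factor of $h_{\lambda_i}^2$ (respectively $e_{\lambda_i}^2$) the given copy of $s_\nu$ came from, so that the algebraic tuple $(j_1,\ldots,j_\ell)$ reads off correctly from $Q$. This is essentially the structural content already supplied by the paper's $\ast$-product and jeu de taquin lemmas, so once those are granted, the signed identities $p_2[h_\lambda] = \prod_i p_2[h_{\lambda_i}]$ and $p_2[e_\lambda] = \prod_i p_2[e_{\lambda_i}]$ deliver the theorem immediately.
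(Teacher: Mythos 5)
Your proposal is correct, and its combinatorial core coincides with the paper's: both arguments reduce to (i) the Littlewood two-row formulas for $s_2[h_n]$ and $s_{11}[h_n]$ (equivalently $p_2[h_n]=\sum_j(-1)^j s_{(2n-j,j)}$), and (ii) the fact, supplied by the paper's $\ast$-product/Fulton lemma, that $\mathrm{Rect}(Q^{(i)})$ faithfully records which factor $s_{(2\lambda_i-j_i,j_i)}$ of $h_{\lambda_i}^2$ a given copy of $s_\nu$ came from --- you correctly flag this as the load-bearing step and it is exactly the paper's Corollary on $Q_i=\mathrm{Rect}(Q^{(i)})$. Where you diverge is in the algebraic bookkeeping: the paper expands $h_\lambda^2=\prod_i\bigl(s_2[h_{\lambda_i}]+s_{11}[h_{\lambda_i}]\bigr)$ and proves by induction the subset-parity formula $s_2[g_1\cdots g_n]=\sum_{|I|\text{ even}}\prod_{i\in I}s_{11}[g_i]\prod_{j\in I^c}s_2[g_j]$, whereas you package the same information as $s_2[f]=\tfrac12\bigl(f^2+p_2[f]\bigr)$ and invoke multiplicativity of $p_2$, so that $p_2[h_\lambda]=\prod_i p_2[h_{\lambda_i}]$ produces the sign $\prod_i(-1)^{j_i}$ in one line. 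The two are trivially equivalent (the paper's induction step $s_2[g_1g_2]=s_2[g_1]s_2[g_2]+s_{11}[g_1]s_{11}[g_2]$ is itself the $p_2$ identity in disguise), but your version avoids the induction entirely and makes the sign $\mathrm{sign}(Q)=(-1)^{|I|}$ appear as an honest coefficient in $p_2[h_\lambda]$ rather than as a case distinction on $|I|\bmod 2$; the paper's version has the mild advantage of exhibiting explicitly which products of plethysms each $s_\nu$ sits in, which it reuses for the skew corollary $s_\mu h_\lambda^2$. Your treatment of the $e_\lambda$ case via $p_2[e_m]=\sum_j(-1)^j s_{(2^{m-j},1^{2j})}$ mirrors the paper's Section~\ref{section:e} in the same way.
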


This gives us the following decomposition :

\begin{theo*}
Consider the following numbers:
\begin{itemize}
    \item $(K^{\nu}_{\lambda^2})^{+} = \#\{ Q \text{ tableau of shape } \nu \text{ and content } \lambda^2 \ | \ \text{sign}(Q)=1 \}$ 
    \item $(K^{\nu}_{\lambda^2})^{-} = \#\{ Q \text{ tableau of shape } \nu \text{ and content } \lambda^2 \ | \ \text{sign}(Q)=-1 \}$ 
    \item $(K^{\nu'}_{\lambda^2})^{+} = \#\{ \widetilde{Q}' \text{ tableau of shape } \nu' \text{ and content } \lambda^2 \ | \ \text{sign}_c(\widetilde{Q})=1 \}$ 
    \item $(K^{\nu'}_{\lambda^2})^{-} = \#\{ \widetilde{Q}' \text{ tableau of shape } \nu' \text{ and content } \lambda^2 \ | \ \text{sign}_c(\widetilde{Q})=-1 \}$ 
\end{itemize}

Then we have: 

\vspace{-2em}
\begin{align*}
s_2[h_{\lambda}] &= \displaystyle \sum_{\nu} (K^{\nu}_{\lambda^2})^{+} s_{\nu} \\
s_{11}[h_{\lambda}] &= \displaystyle \sum_{\nu} (K^{\nu}_{\lambda^2})^{-} s_{\nu} \\
s_2[e_{\lambda}] &= \displaystyle \sum_{\nu} (K^{\nu'}_{\lambda^2})^{+} s_{\nu} \\
s_{11}[e_{\lambda}] &= \displaystyle \sum_{\nu} (K^{\nu'}_{\lambda^2})^{-} s_{\nu} 
\end{align*}
\label{thm:SignTableau}
\end{theo*}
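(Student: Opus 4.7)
The plan is to obtain this statement as a direct numerical consequence of Theorem \ref{thm:signTableauQ}. The two inputs I would first recall are (i) the Kostka expansion $h_\lambda^2 = \sum_\nu K^\nu_{\lambda^2}\, s_\nu$, which follows from RSK applied to pairs of $\lambda$-tuples of row tableaux (equivalently, from iterating the Pieri rule), and (ii) the general identity $g^2 = s_2[g] + s_{11}[g]$, valid for any symmetric function $g$ because $s_2+s_{11}=h_2+e_2$ encodes the decomposition of the square of a character into its symmetric and anti-symmetric parts. Specialising (ii) to $g=h_\lambda$ gives $h_\lambda^2 = s_2[h_\lambda] + s_{11}[h_\lambda]$, so the two plethysms partition the copies of $s_\nu$ counted by $K^\nu_{\lambda^2}$.

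From here the argument is essentially a sorting. By Theorem \ref{thm:signTableauQ}, each tableau $Q$ of shape $\nu$ and content $\lambda^2$ indexes a single copy of $s_\nu$ in $h_\lambda^2$, and that copy is assigned unambiguously to $s_2[h_\lambda]$ when $\text{sign}(Q)=1$ and to $s_{11}[h_\lambda]$ when $\text{sign}(Q)=-1$. Splitting the sum over tableaux according to this sign yields
\[
s_2[h_\lambda] = \sum_\nu (K^\nu_{\lambda^2})^{+}\, s_\nu, \qquad s_{11}[h_\lambda] = \sum_\nu (K^\nu_{\lambda^2})^{-}\, s_\nu,
\]
which are the first two formulas. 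The identity $K^\nu_{\lambda^2} = (K^\nu_{\lambda^2})^{+} + (K^\nu_{\lambda^2})^{-}$ then serves as a sanity check, confirming that the two plethysms together account for every tableau.

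For the elementary case I would run the parallel argument. The expansion $e_\lambda^2 = \sum_\nu K^{\nu'}_{\lambda^2}\, s_\nu$ can be obtained by applying the involution $\omega$ to the $h_\lambda^2$ expansion and using $\omega(s_\nu) = s_{\nu'}$; alternatively, it is a direct count of conjugate tableaux of shape $\nu$ and content $\lambda^2$, of which there are exactly $K^{\nu'}_{\lambda^2}$. Combined with $e_\lambda^2 = s_2[e_\lambda] + s_{11}[e_\lambda]$ and the second part of Theorem \ref{thm:signTableauQ}, partitioning conjugate tableaux by $\text{sign}_c$ yields the remaining two formulas. I do not anticipate a real obstacle here: the nontrivial combinatorial content (the assignment of each tableau to the symmetric or anti-symmetric plethysm) is already carried by Theorem \ref{thm:signTableauQ}, and the present statement is a clean bookkeeping rephrasing in terms of signed Kostka counts.
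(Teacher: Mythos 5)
Your proposal is correct and matches the paper's route exactly: the paper treats this theorem as an immediate consequence of Theorem~1 together with the Kostka expansions of $h_\lambda^2$ and $e_\lambda^2$ and the decomposition $g^2=s_2[g]+s_{11}[g]$, simply partitioning the (conjugate) tableaux of content $\lambda^2$ by their sign. The paper gives no separate written proof beyond this bookkeeping, which your argument carries out explicitly.
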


In section 2, we recall definitions.

In section 3, we introduce all concepts necessary to state and prove our theorems for $h_\lambda$. We recall the product of tableaux of Lascoux, Schützenberger \cite{LascouxSchutzenberger} and Fulton \cite{Fulton}, which uses jeu de taquin, and define the $\text{RSK}$ algorithm using this product of tableaux. 
We show that it gives a bijection between $\lambda$-tuples of row tableaux and pairs of tableaux of the same shape $\nu$, with the recording tableau $Q$ having content $\lambda$. We finally define a sign statistic on tableaux of content $\lambda^2$. We use basic properties of plethysm and symmetric functions, and properties of the RSK algorithm, to prove theorems 1 and 2 for $h_\lambda$.

\setcounter{theo*}{0}

In section 4, we translate our construction to $e_\lambda$. 

Finally, in section 5, we show how our results could be generalized to higher plethysms. We also describe another construction using that of Carré-Leclerc, which gives an alternative proof of our results in terms of tuples of domino tableaux. This construction offers hope of generalization to higher plethysms in terms of ribbon tableaux.

\section{Definitions}



\subsection{Tableaux}

We recall here definitions and notations on tableaux. For a more detailed introduction, see \cite{Fulton} or \cite{Sagan}.

Recall that a partition is a weakly decreasing vector of positive integers $\lambda = (\lambda_1,\lambda_2, \ldots, \lambda_\ell)$. We identify a partition $\lambda$ to its diagram, the collection of boxes both top- and left-justified with $\lambda_i$ boxes in its $i^{th}$ row, counting top to bottom. 
We denote $\lambda^2 = (\lambda_1, \lambda_1,\lambda_2,\lambda_2, \ldots, \lambda_\ell, \lambda_\ell)$ and $\lambda'$ the conjugate of $\lambda$, where rows of $\lambda$ become the columns of $\lambda'$. We denote also $|\lambda| = \sum_i \lambda_i$, $\lambda\vdash n$ if $\sum_i \lambda_i = n$ and $\ell(\lambda) = \ell$ the number of parts of $\lambda$.

A \textit{semistandard tableau} $t$ of shape $\lambda$ is the filling of $\lambda$ with positive integers, such that rows weakly increase from left to right, and columns strictly increase from top to bottom. We denote $\text{SSYT}(\lambda)$ the set of tableaux of shape $\lambda$. 
We use \textit{tableau} to mean \textit{semistandard tableau}, unless stated otherwise.

The \textit{conjugate} of a tableau $t$ of shape $\lambda$ has shape $\lambda'$ and is obtained by reflecting $t$ along its main diagonal. A conjugate tableau then has strictly increasing rows and weakly increasing columns.

We allow tableaux to have skew shapes $\lambda/\mu$, where the boxes of the partition $\mu$ are left blank. The content of a tableau $t$ is the vector $(\alpha_1,\alpha_2,\ldots,\alpha_k)$, where $\alpha_i$ counts the number of entries $i$ in $t$.

\begin{ex}
The tableau $\gyoung(:::;1;1;2;2;3;4,::;1;2;2;3;3,;2;3;3;3)$ has (skew) shape $(9,7,4)/(3,2)$ and content $(3,5,6,1)$.
\label{ex:tab}
\end{ex}

$\text{SSYT}(n)$ correspond to row tableaux of length $n$, and $\text{SSYT}(1^n)$ to column tableaux of height $n$. 
We define $\lambda$-tuples of row tableaux to be elements of the cartesian product  $T_{\leq}(\lambda) := \text{SSYT}(\lambda_1) \times \text{SSYT}(\lambda_2) \times \ldots \times \text{SSYT}(\lambda_{\ell})$, and we define $\lambda$-tuples of column tableaux to be elements of $T_{\wedge}(\lambda) := \text{SSYT}(1^{\lambda_1}) \times \text{SSYT}(1^{\lambda_2}) \times \ldots \times \text{SSYT}(1^{\lambda_{\ell}})$. The content of a tuple of tableaux is the sum of the contents of each tableau forming the tuple.

\begin{ex}
The tuples $s$, $t$ below are $(4,3)^2$-tuples of tableaux (row/column).
\[s = (s_1, s_2, s_3, s_4) = (\young(1345), \young(2224), \young(122), \young(137)) \in T_\leq ((4,3)^2),\]
\[t = (t_1, t_2, t_3, t_4) = (\young(1,3,4,5), \young(2,4,7,8), \young(1,2,3), \young(1,3,7)) \in T_{\wedge} ((4,3)^2).\] 
Their contents are $(4,5,2,2,1,0,1)$ and $(3,2,3,2,1,0,2,1)$.
\label{ex:tupletab}
\end{ex}

The content allows one to associate a monomial $x^t$ with a tableau (or $\lambda$-tuple), where the power of $x_i$ is given by $\alpha_i$, the number of entries $i$ in $t$. 

To a tableau we can also associate a \textit{reading word}, by reading off its entries from left to right and bottom to top. The reading word of a tuple of tableaux is the concatenation of the reading words of each tableau. For example, the tableau $t$ in example~\ref{ex:tupletab} has reading word $54318742321731$, and associated monomial $x^t = x_1^3x_2^2x_3^3x_4^2x_5x_7^2x_8$.




\subsection{Symmetric functions}

For an introduction to symmetric functions, see \cite{Macdonald} or \cite{Sagan}.

Recall that \textit{symmetric functions} over $\Q$ are (potentially infinite) polynomials $f(x)$ on formal variables $x = (x_1, x_2, x_3, \ldots)$, such that permuting any two variables gives back the same polynomial. They form a well studied graded ring $\Lambda$ for which many basis are known, including the following.

For each partition $\lambda$, the \textit{Schur function} $s_\lambda$ can be defined in terms of tableaux :
\[
s_\lambda = \sum_{t \in \text{SSYT}(\lambda)} x^t.
\]

The \textit{homogeneous symmetric functions} $h_\lambda$ can be defined in terms of $\lambda$-tuples of row tableaux :

\[
h_\lambda = \sum_{t \in T_{\leq}(\lambda)} x^t = h_{\lambda_1} h_{\lambda_2} \ldots h_{\lambda_\ell} = s_{(\lambda_1)} s_{(\lambda_2)} \ldots s_{(\lambda_\ell)}.
\]

Similarly, the \textit{elementary symmetric functions} $e_\lambda$ can be defined in terms of $\lambda$-tuples of column tableaux:
\[
e_\lambda = \sum_{t\in T_{\wedge}(\lambda)} x^t = e_{\lambda_1} e_{\lambda_2} \ldots e_{\lambda_\ell} = s_{(1^{\lambda_1})} s_{(1^{\lambda_2})} \ldots s_{(1^{\lambda_\ell})}.
\]

The \textit{power sum symmetric functions} $p_\lambda$ are defined as the product $p_{\lambda_1} p_{\lambda_2} \ldots p_{\lambda_\ell}$, where
\[ p_n = \sum_{i\geq 1} x_i^n.
\]

The transition matrices between these bases are known, and important problems in representation theory correspond to decomposing a symmetric function in term of one of these bases. 
A widely studied problem is to find combinatorial descriptions of the coefficients appearing in the decomposition of the product of two symmetric functions, which is also a symmetric function. Generally we want the decomposition in the basis of the Schur functions,
since they correspond to irreducible representations. The following propositions are well-known results about this problem, and can be found for example in \cite{Macdonald}. 

The Pieri rule describes the product of any Schur function by a homogeneous or elementary symmetric function :

\begin{prop}[Pieri rule]
For any partition $\lambda$ and integer $n$, we have:
\begin{itemize}
\item $s_{\lambda}s_{(n)} = s_{\lambda}h_n = \displaystyle \sum_{\nu} s_{\nu}$,
where the sum is over all partitions $\nu$ obtained by adding $n$ boxes to $\lambda$, no two in the same column.
\item $s_{\lambda}s_{(1^n)} = s_{\lambda}e_n = \displaystyle \sum_{\nu} s_{\nu}$,
where the sum is over all partitions $\nu$ obtained by adding $n$ boxes to $\lambda$, no two in the same row.
\end{itemize}
\end{prop}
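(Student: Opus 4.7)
The plan is to prove the Pieri rule by exhibiting a weight-preserving bijection established via Schensted row insertion (respectively column insertion), which is the natural combinatorial tool given the paper's subsequent use of the product of tableaux and the $\text{RSK}$ algorithm. For the first statement, I would begin by expanding both sides using the combinatorial definitions of $s_\lambda$ and $h_n = s_{(n)}$:
\begin{equation*}
s_\lambda h_n = \sum_{T \in \text{SSYT}(\lambda)} \sum_{U \in \text{SSYT}((n))} x^T x^U, \qquad \sum_\nu s_\nu = \sum_\nu \sum_{S \in \text{SSYT}(\nu)} x^S,
\end{equation*}
where on the right the outer sum is over partitions $\nu$ obtained from $\lambda$ by adding a horizontal strip of size $n$. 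It therefore suffices to exhibit a content-preserving bijection between pairs $(T, U)$ with $T \in \text{SSYT}(\lambda)$ and $U$ a single row of length $n$, and tableaux $S$ of shape such a $\nu$.

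The bijection is Schensted row insertion: given $(T, U)$, I would row-insert the entries $u_1 \leq u_2 \leq \cdots \leq u_n$ of $U$ into $T$ from left to right, producing a tableau $S$ of some shape $\nu \supset \lambda$ with $|\nu| = |\lambda|+n$. The essential combinatorial fact I would invoke is the row bumping lemma (see \cite{Fulton}): inserting a weakly increasing sequence into a tableau produces new boxes forming a horizontal strip (no two in the same column). Conversely, given such an $S$, I would reverse-bump the cells of $\nu/\lambda$ from right to left; the ejected values form a weakly increasing sequence, hence a row tableau $U$, and the remaining tableau has shape $\lambda$. Weight preservation is immediate, so this establishes the first identity.

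For the second statement I would proceed analogously using column insertion, which is equivalent to applying row insertion after transposing (or, more directly, to invoking the combinatorial definition of $e_n$ in terms of column tableaux and the dual $\text{RSK}$ algorithm). A column tableau of height $n$ encodes a strictly increasing sequence, and the dual column bumping lemma asserts that column-inserting such a sequence into $T$ produces new boxes forming a vertical strip (no two in the same row). The same inverse reverse-bumping procedure furnishes the bijection.

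The main technical obstacle is the verification of the row (and column) bumping lemmas: namely, that the bumping path of $u_{i+1}$ ends strictly to the right of (and weakly above) that of $u_i$ whenever $u_i \leq u_{i+1}$, so that successive insertions deposit their new box in distinct columns and thus yield a horizontal strip. This is proved by induction on the positions along the bumping paths, comparing at each row the element displaced by $u_i$ with that displaced by $u_{i+1}$; the analogous statement for column insertion with strictly increasing input yields the vertical strip. Both are standard results, but they are the substantive content behind the otherwise clean combinatorial identity.
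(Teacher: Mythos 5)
Your proof is correct. The paper does not actually prove this proposition --- it records the Pieri rule as a well-known fact and points to \cite{Macdonald} --- so there is no in-paper argument to compare against. Your bijective argument via Schensted row insertion and the row bumping lemma (with column insertion and the dual bumping lemma for the $e_n$ case) is the standard combinatorial proof, and it fits naturally with the machinery the paper goes on to use (the $\ast$ product, jeu de taquin, and RSK). The only nitpick is that reverse-bumping the horizontal strip from right to left ejects the entries of $U$ in weakly \emph{decreasing} order (last inserted, first ejected), so one recovers $U$ by reading the ejected values in reverse; this does not affect the validity of the bijection.
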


This allows us to describe the decomposition of $h_n^2$ and $e_n^2$ : 

\begin{prop}
\[(h_n)^2 = \sum_{j=0}^n s_{(2n-j,j)} \qquad \text{and} \qquad (e_n)^2 = \sum_{j=0}^n s_{(2^{n-j},1^{2j}}).\]
\end{prop}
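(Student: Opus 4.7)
The plan is to apply the Pieri rule directly to each of the two products. Since $h_n = s_{(n)}$ and $e_n = s_{(1^n)}$, we have $(h_n)^2 = s_{(n)} \cdot h_n$ and $(e_n)^2 = s_{(1^n)} \cdot e_n$, which are precisely the situations covered by the two cases of the Pieri rule stated above.

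For the homogeneous case, I would enumerate the partitions $\nu$ obtained by adding $n$ boxes to the one-row shape $(n)$ with no two added in the same column. Since $(n)$ only has one row, the new boxes can either extend row $1$ (strictly to the right of column $n$) or start a new row $2$. Writing $j$ for the number of boxes placed in row $2$, these $j$ boxes must occupy columns $1,\dots,j$, and the remaining $n-j$ boxes extend row $1$ to columns $n+1,\dots,2n-j$. The "no two in the same column" condition is automatic because $j \le n$ ensures the new row-$2$ boxes (columns $1,\dots,j$) do not share columns with the new row-$1$ boxes (columns $n+1,\dots,2n-j$). The resulting partition is $(2n-j, j)$, and $j$ may range over $0 \le j \le n$ (with $2n-j \ge j$ guaranteeing a valid partition). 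This gives exactly $(h_n)^2 = \sum_{j=0}^{n} s_{(2n-j,j)}$.

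For the elementary case, the argument is symmetric under conjugation. I would add $n$ boxes to the one-column shape $(1^n)$ with no two added in the same row. The new boxes either extend column $1$ downward (below row $n$) or start a new column $2$. Letting $j$ be the number of boxes placed below row $n$ (in column $1$), column $1$ then has $n+j$ boxes and column $2$ has the remaining $n-j$ boxes, occupying rows $1,\dots,n-j$ to yield a valid partition. The "no two in the same row" condition is automatic since the new column-$2$ entries lie in rows $1,\dots,n-j$ and the new column-$1$ entries lie in rows $n+1,\dots,n+j$. Reading off row lengths gives $n-j$ parts equal to $2$ followed by $2j$ parts equal to $1$, i.e.\ the partition $(2^{n-j}, 1^{2j})$, with $j$ ranging over $0 \le j \le n$. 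This yields $(e_n)^2 = \sum_{j=0}^{n} s_{(2^{n-j},1^{2j})}$.

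There is no real obstacle here; the only point to watch is the careful parametrization of the partitions by $j$ and checking that the Pieri column/row conditions are automatically satisfied in these two one-row/one-column base cases. One could equally well derive the elementary case from the homogeneous case by applying the conjugation involution $\omega$, which sends $h_n \mapsto e_n$ and $s_\lambda \mapsto s_{\lambda'}$, noting that $(2n-j,j)' = (2^{j}, 1^{2n-2j})$ and reindexing $j \mapsto n-j$.
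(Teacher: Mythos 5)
Your proof is correct and matches the paper's (implicit) argument: the paper derives this proposition directly from the Pieri rule, exactly as you do, and your careful parametrization by $j$ together with the conjugation remark fills in the details the paper leaves unstated.
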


Iteratively applying the Pieri rule to get a description of $h_\lambda^2$ and $e_\lambda^2$, we obtain:

\begin{prop}\label{Power}
\[
(h_{\lambda})^2 = h_{\lambda^2} = \displaystyle \sum_{\nu \vdash 2|\lambda|} K_{\lambda^2}^{\nu} s_{\nu} \qquad \text{and} \qquad (e_{\lambda})^2 = e_{\lambda^2} = \displaystyle \sum_{\nu \vdash 2|\lambda|} K_{\lambda^2}^{\nu'} s_{\nu},
\]
where the Kostka numbers $K_{\lambda^2}^\nu$ count the number of tableaux of shape $\nu$ and content $\lambda^2$.
\end{prop}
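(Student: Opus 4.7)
The plan is to reduce the statement to an iterated application of the Pieri rule, noting that the coefficient equals $K^{\nu}_{\lambda^2}$ essentially by the classical tableau interpretation of $h_\mu$ in the Schur basis.

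First, I would observe that the definition of $h_\lambda$ as $h_{\lambda_1} h_{\lambda_2} \cdots h_{\lambda_\ell}$ (given earlier in the excerpt) immediately yields
\[
(h_\lambda)^2 = h_{\lambda_1}^2 h_{\lambda_2}^2 \cdots h_{\lambda_\ell}^2 = h_{\lambda_1} h_{\lambda_1} h_{\lambda_2} h_{\lambda_2} \cdots h_{\lambda_\ell} h_{\lambda_\ell} = h_{\lambda^2},
\]
so it suffices to expand $h_{\lambda^2}$ in the Schur basis. Writing $\lambda^2 = (\mu_1, \mu_2, \ldots, \mu_{2\ell})$, I would expand $h_{\mu_1} h_{\mu_2} \cdots h_{\mu_{2\ell}}$ by starting from $s_\emptyset = 1$ and applying the Pieri rule $2\ell$ times. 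Each application multiplies the current sum $\sum_\kappa s_\kappa$ by $h_{\mu_k}$, replacing each $s_\kappa$ by the sum over shapes $\tilde\kappa$ obtained from $\kappa$ by adding a horizontal strip of size $\mu_k$. Thus the coefficient of $s_\nu$ in $h_{\lambda^2}$ equals the number of chains of partitions
\[
\emptyset = \kappa^{(0)} \subset \kappa^{(1)} \subset \cdots \subset \kappa^{(2\ell)} = \nu,
\]
in which each $\kappa^{(k)}/\kappa^{(k-1)}$ is a horizontal strip of size $\mu_k$.

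The key step is the classical bijection between such chains and semistandard tableaux of shape $\nu$ and content $\lambda^2$: to a chain one associates the tableau obtained by placing the entry $k$ in every box of $\kappa^{(k)}/\kappa^{(k-1)}$. The horizontal-strip condition translates exactly to the semistandardness conditions (weakly increasing rows, strictly increasing columns), and the box counts $|\kappa^{(k)}/\kappa^{(k-1)}| = \mu_k$ translate to the content being $\lambda^2$. This identifies the coefficient of $s_\nu$ with $K^{\nu}_{\lambda^2}$, proving the first identity.

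For the elementary case, the argument is parallel, with the dual Pieri rule for $e_n$: one iterates $(e_\lambda)^2 = e_{\lambda^2} = e_{\mu_1} \cdots e_{\mu_{2\ell}}$ and each step adds a \emph{vertical} strip of size $\mu_k$. The same chain-to-filling bijection then produces, for each $\nu$, fillings of $\nu$ whose successive differences are vertical strips of sizes $\mu_1, \ldots, \mu_{2\ell}$; transposing such a filling yields a semistandard tableau of shape $\nu'$ and content $\lambda^2$, so the coefficient of $s_\nu$ in $e_{\lambda^2}$ is $K^{\nu'}_{\lambda^2}$. The only point requiring care is the bookkeeping between the vertical-strip chains on $\nu$ and genuine semistandard tableaux on $\nu'$; this is where I would be most careful, but it is well known and causes no real difficulty.
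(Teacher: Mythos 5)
Your proposal is correct and follows exactly the route the paper takes: the paper's entire justification is the sentence ``Iteratively applying the Pieri rule,'' and you have simply written out the standard details (the chain-of-horizontal-strips bijection with semistandard tableaux, and its vertical-strip/conjugate analogue for $e_{\lambda}$). No gaps; the bookkeeping step you flag at the end is indeed the classical correspondence and works as you describe.
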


\section{Associating a $Q$-tableau indexing a copy of $s_\nu$ in $h_\lambda^2$ to a plethysm}



We will see that the tableaux of shape $\nu$ and content $\lambda^2$ counted by $K_{\lambda^2}^{\nu}$ in $ h_\lambda^2 = \sum_\nu K^{\nu}_{\lambda^2} s_\nu$ are $Q$-tableaux obtained through the $\text{RSK}$ algorithm. 

\subsection{Product of tableaux, jeu de taquin and RSK}

We define the RSK algorithm in terms of product of tableaux, which will be central to our construction. The product of tableaux was introduced by Lascoux and Schützenberger in the setting of the plactic monoid \cite{LascouxSchutzenberger}. Its elements correspond to tableaux: a tableau is identified with the Knuth-equivalence class of the reading word of the tableau. The product of tableaux was more formally defined by Fulton in \cite{Fulton} using \textit{jeu de taquin}, so that it corresponds to the product on words (concatenation) in the plactic monoid.

Jeu de taquin gives a way to rectify a skew tableau of shape $\lambda/\mu$ to a straight tableau. A jeu de taquin slide starts at an inner corner of $\lambda/\mu$ and exchanges the empty cell 
with a non-empty adjacent cell, respecting constraints on the entries of the rows and columns, until the empty cell lies on the outer border. The rectified tableau is independent of the order of the slides. \\

Let's define the product of two tableaux $t_1, t_2$, of respective shape $\mu, \nu$ :
\begin{enumerate}
\item Construct the skew tableau $t_1 \ast t_2$ by placing $t_1$ below and left of $t_2$, into the skew shape $(\mu_1+\nu_1, \mu_1+\nu_2, \ldots, \mu_1+\nu_\ell, \mu_1, \mu_2, \ldots, \mu_k)/(\mu_1^\ell)$. 
\item The product $T$ of $t_1,t_2$ is the rectification of $t_1\ast t_2$ using jeu de taquin.
\end{enumerate}

\begin{ex}
For $t_1 = \young(1122333,2334,445)$ and $t_2 = \young(12222,233,34,5)$,\\
$t_1\ast t_2 = \gyoung(:::::::;1;2;2;2;2,:::::::;2;3;3,:::::::;3;4,:::::::;5,;1;1;2;2;3;3;3,;2;3;3;4,;4;4;5)$ \hspace{-3em}, and $T = \text{Rect}(t_1\ast t_2) = \young(111222222,22333333,3444,45,5)$.\\

Three jeu de taquin slides are illustrated below.
\begin{center}
   \gyoung(:::::::;1;2;2;2;2,:::::::;2;3;3,:::::::;3;4,::::::!\rd;!\wt;5,;1;1;2;2;3;3!\rd;3,!\wt;2;3;3;4,;4;4;5) \hspace{-2em} \raisebox{1em}{$\rightarrow $} \hspace{-3.5em} \gyoung(:::::::;1;2;2;2;2,:::::::;2;3;3,::::::!\rd;!\wt;3;4,::::::!\rd;3;5,!\wt;1;1;2;2;3;3,;2;3;3;4,;4;4;5) \hspace{-2em} \raisebox{1em}{$\rightarrow $} \hspace{-3.5em} \gyoung(:::::::;1;2;2;2;2,::::::!\rd;;2!\wt;3;3,::::::;3!\rd;3;4,!\wt::::::;5,;1;1;2;2;3;3,;2;3;3;4,;4;4;5) \hspace{-2em} \raisebox{1em}{$\rightarrow  \ldots $} 
\end{center}
\end{ex}

\begin{prop}
Let $t_1, t_2$ be tableaux of respective shape $\mu$, $\nu$. The word obtained by the concatenation of $\text{word}(t_1)$ and $\text{word}(t_2)$ lies in the plactic class of the reading word of $T = \text{Rect}(t_1\ast t_2)$. 
\end{prop}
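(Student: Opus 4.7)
The plan is to show the claim in two observations that are each essentially direct from the definitions.

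First, I would compute the reading word of the skew tableau $t_1 \ast t_2$ explicitly from its construction. Recall that $t_1 \ast t_2$ is obtained by placing $t_1$ in the lower-left region and $t_2$ in the upper-right region of the skew shape $(\mu_1 + \nu_1, \ldots, \mu_1 + \nu_\ell, \mu_1, \mu_2, \ldots, \mu_k)/(\mu_1^\ell)$. Reading entries from left to right and bottom to top, one first traverses the rows of $t_1$ in the lower block (producing precisely $\text{word}(t_1)$), and then the rows of $t_2$ in the upper block, since the empty cells to their left contribute nothing (producing $\text{word}(t_2)$). Thus
\[
\text{word}(t_1 \ast t_2) = \text{word}(t_1)\cdot\text{word}(t_2).
\]

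Second, I would invoke the fundamental property of jeu de taquin, stated and proved in Fulton \cite{Fulton}: performing a jeu de taquin slide on a skew tableau yields a new skew tableau whose reading word is Knuth-equivalent (i.e., plactically equivalent) to the original. Since $T = \text{Rect}(t_1 \ast t_2)$ is obtained from $t_1 \ast t_2$ by a finite sequence of such slides, iterating this invariance gives
\[
\text{word}(T) \; \equiv \; \text{word}(t_1 \ast t_2) \; = \; \text{word}(t_1)\cdot\text{word}(t_2)
\]
in the plactic monoid, which is exactly the claim.

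The main obstacle, if any, is bookkeeping in the first step: one has to be careful that the empty region $(\mu_1^\ell)$ really does sit entirely to the left of the rows of $t_2$ and above the rows of $t_1$, so that the bottom-to-top left-to-right traversal concatenates the two reading words in the correct order with no interleaving. Once that is checked, the second step is a direct citation of the Knuth-invariance of jeu de taquin, and no further computation is needed.
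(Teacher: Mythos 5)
Your proposal is correct and follows exactly the paper's own argument: identify the reading word of $t_1 \ast t_2$ as the concatenation $\text{word}(t_1)\cdot\text{word}(t_2)$ (using the placement of $t_1$ below and to the left of $t_2$), then invoke the Knuth-invariance of jeu de taquin slides. Your additional care in checking that the bottom-to-top, left-to-right traversal produces the two reading words in order without interleaving is a welcome elaboration of a step the paper leaves implicit, but the proof is the same.
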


\begin{proof}[Idea of proof]
Applying jeu de taquin on a skew tableau takes its reading word onto a Knuth-equivalent reading word. Since the reading word of $t_1\ast t_2$ is equal to $\text{word}(t_1)\text{word}(t_2)$, we have the desired result.
\end{proof}

The RSK algorithm \cite{Knuth} is closely linked to the plactic monoid, since all words in the same (plactic) class will have the same insertion tableau under RSK. 
Recall that RSK establishes a bijection between biwords $W = \big( $~\begin{tabular}{c} $u_1 \ u_2 \ \ldots \ u_k$ \\ $v_1 \ v_2 \ \ldots \ v_k$ \\ \end{tabular} \big) = \big( \begin{tabular}{c} $u$ \\ $v$ \\ \end{tabular} $\big)$ and pairs of tableaux $(P,Q)$ of the same shape $\mu$, for $\mu\vdash k$, with $\text{content}(P) = \text{content}(v)$, and $\text{content}(Q) = \text{content}(u)$. Recall that the bi-letters $\big($ \begin{tabular}{c} $u_i$ \\ $v_i$ \\ \end{tabular} $\big)$ of a biword are ordered such that the $u_i$'s weakly increase, and for $u_i = u_{i+1}$, $v_i\leq v_{i+1}$.

Note that words $w$ are inserted by RSK into pairs $(P,Q)$ by considering $u = 123\ldots \text{length}(w)$ and $v = w$. Then the tableau $Q$ is standard. \\

Our alternate definition of RSK using product of tableaux 
goes as follows.\\
Let $P_i$ the tableau obtained after the insertion of $v_i$, with $P_0 = \emptyset$.
\Ystdtext1
\Yboxdim{25pt}
Then $P_{i+1} = \text{Rect}(P_i\ast$ \young(<$v_{i+1}$>)$)$, and $P = P_k$.
\Yboxdim{0.4cm}
The tableau $Q$ records the order in which cells are added,  
with entry $u_i$ in position $\text{shape}(P_{i+1})/\text{shape}(P_i)$.

\begin{rem}
This process can be seen as rectifying $v_1\ast v_2 \ast \ldots \ast v_k$ one cell after the other from left to right to yield the tableaux $P_i$, as illustrated below. 
Recording the added cells with the $u_i$'s yields the tableaux $Q_i$. 

\vspace{-1em}
\gyoung(:::::::::::::;3,::::::::::::;2,:::::::::::;1,::::::::::;2,:::::::::;1,::::::::;1,:::::::;3,::::::;3,:::::;2,::::;1,:::;4,::;3,:;2,;1) \hspace{-6em} $\rightarrow$ \hspace{-6em}
\gyoung(:::::::::::::;3,::::::::::::;2,:::::::::::;1,::::::::::;2,:::::::::;1,::::::::;1,:::::::;3,::::::;3,:::::;2,::::;1,:::;4,::;3,;1;2,:) \hspace{-5em} $\rightarrow$ \hspace{-6em}
\gyoung(:::::::::::::;3,::::::::::::;2,:::::::::::;1,::::::::::;2,:::::::::;1,::::::::;1,:::::::;3,::::::;3,:::::;2,::::;1,:::;4,;1;2;3,:,:) \hspace{-5em} $\rightarrow$ \hspace{-6em}
\gyoung(:::::::::::::;3,::::::::::::;2,:::::::::::;1,::::::::::;2,:::::::::;1,::::::::;1,:::::::;3,::::::;3,:::::;2,::::;1,;1;2;3;4,:,:,:) \hspace{-5em} $\rightarrow$ \hspace{-5em} 
\gyoung(::::::::::::;3,:::::::::::;2,::::::::::;1,:::::::::;2,::::::::;1,:::::::;1,::::::;3,:::::;3,::::;2,;1;1;3;4,;2,:,:,:) \hspace{-5em} $\rightarrow$ $\ldots$ 
\end{rem}

\subsection{Associating a $\lambda^2$-tuple of row tableaux to a $Q$-tableau}


\begin{prop}\label{prop:RSK}
$\lambda$-tuples of row tableaux with content $\alpha$ are in bijection with 
pairs of tableaux of the same shape $(P_t,Q_t)$, where the content of $P_t$ is $\alpha$ and the content of $Q_t$ is $\lambda$.
\end{prop}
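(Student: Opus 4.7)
The plan is to exhibit the bijection explicitly via the RSK algorithm defined in the previous subsection. Given a $\lambda$-tuple $t = (t_1, t_2, \ldots, t_\ell) \in T_{\leq}(\lambda)$ of content $\alpha$, I would build the biword $W_t$ whose top row is $u = 1^{\lambda_1} 2^{\lambda_2} \cdots \ell^{\lambda_\ell}$ (each $i$ repeated $\lambda_i$ times) and whose bottom row $v$ is the concatenation $\text{word}(t_1)\,\text{word}(t_2) \cdots \text{word}(t_\ell)$. Because each $t_i$ is a row tableau, its reading word is weakly increasing; combined with the non-decreasing top row, this makes $W_t$ a valid RSK biword (the within-block weak increase is exactly the row condition on each $t_i$). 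Define $(P_t, Q_t) = \text{RSK}(W_t)$.

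The content conditions then follow immediately from standard properties of RSK recalled above: $\text{content}(P_t) = \text{content}(v) = \sum_{i=1}^{\ell} \text{content}(t_i) = \alpha$, and $\text{content}(Q_t) = \text{content}(u) = \lambda$. The pair $(P_t, Q_t)$ lives on the same shape by construction, so the forward map is well-defined.

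For the inverse direction, I would start from any pair $(P, Q)$ of the same shape with $\text{content}(P) = \alpha$ and $\text{content}(Q) = \lambda$ and apply inverse RSK to recover a biword with top row $u$ and bottom row $v$. Since $\text{content}(u) = \lambda$ and $u$ is weakly increasing, we must have $u = 1^{\lambda_1} 2^{\lambda_2} \cdots \ell^{\lambda_\ell}$. The biword ordering then forces $v$ to be weakly increasing within each block of constant $u$-value, so the $i$-th such block (of length $\lambda_i$) is a row tableau $t_i \in \text{SSYT}(\lambda_i)$. Collecting these yields a $\lambda$-tuple $t \in T_{\leq}(\lambda)$ whose content is $\text{content}(v) = \alpha$.

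The two maps are mutually inverse because RSK is a bijection and the two encodings (biword from tuple, tuple from biword) are manifestly inverse operations. There is no real obstacle here: the only substantive point is recognizing that when the top row of the biword is pinned to $1^{\lambda_1} 2^{\lambda_2} \cdots \ell^{\lambda_\ell}$, the RSK biword condition on the bottom row is tautologically equivalent to the row-tableau condition on each $t_i$. The proposition is thus a content-refinement of RSK adapted to tuples of rows, and will be the key combinatorial device used in the next subsection to read off the $Q$-tableau indexing a given copy of $s_\nu$ in $h_\lambda^2$.
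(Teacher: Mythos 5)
Your proposal is correct and follows essentially the same route as the paper: encode the $\lambda$-tuple as the biword with top word $1^{\lambda_1}2^{\lambda_2}\cdots\ell^{\lambda_\ell}$ and bottom word the concatenation of the reading words, observe that the biword ordering condition within each block is exactly the row-tableau condition, and invoke the RSK bijection together with its content-preservation properties. The paper states the inverse direction more tersely, but your explicit check of it is the same argument.
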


\begin{proof}
Consider the biword $W_t$ built from a $\lambda$-tuple of row tableaux $t = (t_1,t_2, \ldots, t_k)$ using the bi-letters $\left(\begin{tabular}{c}i\\j\end{tabular}\right)$, for $j$ an entry in $t_i$. The top word of $W_t$ is $1^{\lambda_1}2^{\lambda_2}\ldots \ell^{\lambda_k}$, and the bottom word is the reading word of $t$.

The map $t \mapsto W_t$ is a bijection between $\lambda$-tuples of row tableaux of content $\alpha$ and biwords such that the top word has content $\lambda$, and the bottom word has content $\alpha$ and is the reading word of the $\lambda$-tuples. By RSK, these biwords are in bijection with pairs $(P_t,Q_t)$ of tableaux with the desired properties. 
\end{proof}

\begin{ex}
If $t = (\young(1234),\young(1233),\young(112),\young(123))$, then $W_t = \big( \begin{tabular}{cccc} 1111&2222& 333 & 444 \\ 1234 & 1233 & 112 & 123 \\ \end{tabular}\big)$ and $(P,Q) = (\gyoung(;1;1;1;1;1;2;3,;2;2;2;3,;3;3,;4),\young(1111224,22234,33,4) )$.
\end{ex}

\begin{rem}
In the case of a biword associated to a $\lambda$-tuple of row tableaux, the rectifying process of the RSK algorithm can be greatly sped up as illustrated below, by first rectifying together the letters in the same line tableau $t_i$.\\ 

Since all letters of the reading word of $t_i$ correspond to entries $i$ in $Q$, the skew tableau $t_1\ast t_2 \ast \ldots \ast t_\ell$ can be rectified recursively row by row, recording the added cells in the subtableaux of $Q$ with entries $1$ up to $i$. 

\vspace{0.5em}
\hspace{-4em} \gyoung(:::::::::::;1;2;3,::::::::;1;1;2,::::;1;2;3;3,;1;2;3;4) \hspace{-3em} $\rightarrow$  
\gyoung(::::::::;1;2;3,:::::;1;1;2,;1;1;2;3;3,;2;3;4) \hspace{-1.5em} $\rightarrow$ \gyoung(:::::;1;2;3,;1;1;1;1;2,;2;2;3;3,;3;4) \hspace{-1em} $\rightarrow$ \gyoung(;1;1;1;1;1;2;3,;2;2;2;3,;3;3,;4)\\
\hspace{-5em} $Q_4 = \young(1111)$ \hspace{1em} $Q_8 = \young(111122,222)$ \hspace{0.2em} $Q_{11} = \young(111122,2223,33)$ \hspace{0.2em} $Q = \young(1111224,22234,33,4)$
\end{rem}


\begin{rem}
In the case of a $\lambda^2$-tuple of row tableaux, we have a third way to rectify $t_1\ast t_2 \ast \ldots \ast t_{2\ell -1} \ast t_{2\ell}$ into $P_t$ : each pair of tableaux of respective length $\lambda_i$ can be first rectified together into tableaux $T_i$. 
They correspond to insertion tableaux for the sub-biwords $\big( $ \begin{tabular}{cccccccc}  2i-1 \ldots 2i-1  & 2i \ldots 2i\\ word($t_{2i-1}$) & word($t_{2i}$) \end{tabular} $\big)$. 
\end{rem}

We can recover the associated recording tableaux $Q_i$ directly from $Q$ :

\begin{cor}
Let $t$ be a $\lambda^2$-tuple of row tableaux, $W_t$ the associated biword and $(P,Q) = \text{RSK}(W_t)$. Let \[
(T_i,Q_i) = \text{RSK}\big(  \begin{tabular}{cccccccc} 2i-1 \ldots 2i-1 & 2i \ldots 2i \\ \text{word}(
$t_{2i-1}$) & \text{word}($t_{2i}$) \end{tabular} \big),
\] and $Q^{(i)}$ the subtableau of $Q$ with entries $2i-1$ and $2i$, for $1\leq i \leq \ell = \ell(\lambda)$.

Then $Q_i = \text{Rect}(Q^{(i)})$.
\label{theorem:rectificationOfQi}
\end{cor}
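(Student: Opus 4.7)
The plan is to apply a classical structural result for RSK due to Fulton (Chapter 5 of \cite{Fulton}), which appears to be the Proposition the authors commented out of this version: if one begins the insertion process from a non-empty tableau $P'$ of shape $\lambda'$, inserts a word $v$, and records the added cells using the top word $u$ of the biword $(u;v)$, then the resulting skew recording tableau $S$ of shape $\mu'/\lambda'$ (where $\mu'$ is the shape after insertion) satisfies $\text{Rect}(S) = Q$, where $(P,Q) = \text{RSK}(u;v)$ starting from the empty tableau. I will use this lemma as a black box.

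First, I would partition the biword $W_t$ into consecutive sub-biwords $W_t^{(1)}, W_t^{(2)}, \ldots, W_t^{(\ell)}$, where $W_t^{(i)}$ consists of the bi-letters whose top entry lies in $\{2i-1, 2i\}$. Explicitly, the top word of $W_t^{(i)}$ is $(2i-1)^{\lambda_i}(2i)^{\lambda_i}$ and the bottom word is the concatenation $\text{word}(t_{2i-1})\,\text{word}(t_{2i})$. By construction this is precisely the biword whose RSK image is defined to be $(T_i, Q_i)$ in the statement.

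Next, because bi-letters are processed in the lexicographic order of the top word, running RSK on $W_t$ is equivalent to running RSK on $W_t^{(1)}$, then continuing by inserting $W_t^{(2)}$ into the resulting tableau, and so on. Let $P^{[<i]}$ denote the insertion tableau obtained after processing $W_t^{(1)}, \ldots, W_t^{(i-1)}$. Inserting $W_t^{(i)}$ into $P^{[<i]}$ adds new cells, each recorded with an entry $2i-1$ or $2i$; by the definition of $Q^{(i)}$ as the subtableau of $Q$ carrying entries $2i-1$ and $2i$, these added cells form exactly the skew tableau $Q^{(i)}$.

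Finally, I would invoke Fulton's proposition for the sub-biword $W_t^{(i)}$ with starting tableau $P^{[<i]}$. The skew recording tableau produced is $Q^{(i)}$, while the RSK image of $W_t^{(i)}$ starting from the empty tableau is, by definition, $(T_i, Q_i)$. The proposition therefore yields $\text{Rect}(Q^{(i)}) = Q_i$, as claimed. The only real obstacle is making Fulton's proposition cleanly available in the present text, since the statement the authors need is the one they commented out; once it is restored (or cited precisely from \cite{Fulton}), the corollary is an immediate specialization, requiring no further combinatorial argument.
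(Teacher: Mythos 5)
Your proof is correct and follows exactly the route the paper intends: the paper's entire proof is the one-line citation ``This follows from a result found in chapter 5 of \cite{Fulton}'', and the result in question is precisely the lemma you isolate (insertion of a word $v$ into a nonempty tableau produces a skew recording tableau whose rectification is the $Q$-tableau of $\mathrm{RSK}(u;v)$ from the empty tableau). Your write-up simply supplies the block-decomposition details that the authors leave implicit, so there is nothing to correct.
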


This follows from a result found in chapter 5 of \cite{Fulton}.


We can now define a sign statistic on the $Q$-tableaux which will allow us to discriminate in which plethysm the associated copy of $s_\nu $ lies.

Let $Q$ be a tableau of shape $\nu\vdash 2|\lambda|$ and content $\lambda^2$, for a fixed partition $\lambda = (\lambda_1, \ldots, \lambda_\ell)$. Let $Q^{(i)}$ be the subtableau of $Q$ with entries $2i-1$ and $2i$. Then $Q_i = \text{Rect}(Q^{(i)})$ is of shape $(2\lambda_i-j_i, j_i)$, for $1\leq i \leq \ell$.\\ Define the sign of $Q$ to be $\text{sign}(Q) = \prod_{i=1}^{\ell} (-1)^{j_i}$.\\

\begin{theo*}[Part 1]
The copy of $s_\nu$ indexed by $Q$ in $h_\lambda^2$ lies in $s_2[h_\lambda]$ if $\text{sign}(Q) = 1$, and in $s_{11}[h_\lambda]$ if $\text{sign}(Q) = -1$.
\end{theo*}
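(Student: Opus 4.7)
The plan is to translate the decomposition $h_\lambda^2 = s_2[h_\lambda] + s_{11}[h_\lambda]$ into a signed sum of products of Schur functions, and then match each summand with $Q$-tableaux via RSK and Corollary~\ref{theorem:rectificationOfQi}.

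First, from $s_2 = \tfrac{1}{2}(p_1^2+p_2)$ and $s_{11} = \tfrac{1}{2}(p_1^2-p_2)$, together with the plethysm rule $p_k[fg]=p_k[f]\,p_k[g]$, one derives the key identity
\[
s_2[fg] = s_2[f]\,s_2[g] + s_{11}[f]\,s_{11}[g], \qquad s_{11}[fg] = s_2[f]\,s_{11}[g] + s_{11}[f]\,s_2[g].
\]
The base case $s_2[h_n] = \sum_{j\text{ even}} s_{(2n-j,j)}$ and $s_{11}[h_n] = \sum_{j\text{ odd}} s_{(2n-j,j)}$ is classical (a special instance of Carr\'e--Leclerc~\cite{CarreLeclerc}, since $h_n = s_{(n)}$). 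Iterating the identity on $h_\lambda = h_{\lambda_1}\cdots h_{\lambda_\ell}$ and substituting the base case yields
\[
s_2[h_\lambda] = \sum_{\substack{(j_1,\ldots,j_\ell)\\ \sum j_i\ \text{even}}} \prod_{i=1}^\ell s_{(2\lambda_i-j_i,\,j_i)},
\]
with the analogous sum over odd $\sum j_i$ for $s_{11}[h_\lambda]$.

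Next, I would match each product $\prod_i s_{(2\lambda_i-j_i,j_i)}$ with $Q$-tableaux. Applying RSK pair-by-pair to a $\lambda^2$-tuple $t=(t_1,\ldots,t_{2\ell})$, each pair $(t_{2i-1},t_{2i})$ produces $(T_i,Q_i)$ of common shape $(2\lambda_i-j_i,j_i)$, where $Q_i$ has content $(\lambda_i,\lambda_i)$. Since there is a unique semistandard filling of $(2\lambda_i-j_i,j_i)$ with content $(\lambda_i,\lambda_i)$, prescribing the shape of $Q_i$ is equivalent to prescribing $j_i$, and $T_i$ then varies freely. Therefore
\[
\prod_{i=1}^\ell s_{(2\lambda_i-j_i,j_i)} \;=\; \sum_{T_1,\ldots,T_\ell}\prod_i x^{T_i} \;=\; \sum_{t\,:\ \text{shape}(Q_i)=(2\lambda_i-j_i,j_i)\ \forall i} x^t.
\]
Reorganizing this sum via the full RSK bijection $t\leftrightarrow(P,Q)$ (with $P$ of shape $\nu$ and $Q$ of shape $\nu$, content $\lambda^2$) and invoking Corollary~\ref{theorem:rectificationOfQi} to identify $Q_i = \text{Rect}(Q^{(i)})$, the right-hand side becomes $\sum_Q s_{\text{shape}(Q)}$, summed over $Q$ of content $\lambda^2$ with $\text{Rect}(Q^{(i)})$ of shape $(2\lambda_i-j_i,j_i)$ for each $i$.

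Finally, summing over all $(j_1,\ldots,j_\ell)$ with $\sum j_i$ even, the constraint on $Q$ becomes $\text{sign}(Q)=\prod_i(-1)^{j_i}=+1$, yielding $s_2[h_\lambda] = \sum_{Q:\text{sign}(Q)=+1} s_{\text{shape}(Q)}$, with the odd case symmetric. The main obstacle is gluing the pair-wise and full RSK correspondences into one consistent bijection; this is precisely what Corollary~\ref{theorem:rectificationOfQi} provides, making the identification $Q_i = \text{Rect}(Q^{(i)})$ transparent. Beyond this, the argument reduces to formal manipulation of plethystic identities.
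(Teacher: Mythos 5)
Your proposal is correct and follows essentially the same route as the paper: the identity $s_2[fg]=s_2[f]s_2[g]+s_{11}[f]s_{11}[g]$ iterated over $h_\lambda=h_{\lambda_1}\cdots h_{\lambda_\ell}$, the Littlewood base case $s_2[h_n]=\sum_{j\ \mathrm{even}}s_{(2n-j,j)}$, and the identification $Q_i=\text{Rect}(Q^{(i)})$ from Corollary~\ref{theorem:rectificationOfQi} to read off the $j_i$ from $Q$. Your monomial-level bookkeeping (using the uniqueness of the filling of $(2\lambda_i-j_i,j_i)$ with content $(\lambda_i,\lambda_i)$ and reorganizing the sum over tuples by their common recording tableau) makes explicit a step the paper treats more briefly, but it is the same argument.
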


This is proved in the next section, where we show that this sign statistic is well defined, 
and can be used to discriminate contribution to a plethysm. 
We also have :

\begin{theo*}[Part 1]
Consider the following numbers:
\begin{itemize}
    \item $(K^{\nu}_{\lambda^2})^{+} = \#\{ Q \text{ tableau of shape } \nu \text{ and content } \lambda^2 \ | \ \text{sign}(Q)=1 \}$ 
    \item $(K^{\nu}_{\lambda^2})^{-} = \#\{ Q \text{ tableau of shape } \nu \text{ and content } \lambda^2 \ | \ \text{sign}(Q)=-1 \}$ 
\end{itemize}

Then we have: 

\vspace{-2em}
\begin{align*}
s_2[h_{\lambda}] &= \displaystyle \sum_{\nu} (K^{\nu}_{\lambda^2})^{+} s_{\nu} \\
s_{11}[h_{\lambda}] &= \displaystyle \sum_{\nu} (K^{\nu}_{\lambda^2})^{-} s_{\nu} 
\end{align*}
\end{theo*}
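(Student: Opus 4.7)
The plan is to derive the decomposition by combining three ingredients: the plethystic identity for $h_\lambda^2$, the Schur expansion of $h_\lambda^2$, and the preceding per-$Q$ classification. First, I would recall that $h_\lambda^2 = s_2[h_\lambda] + s_{11}[h_\lambda]$, which is the $n=2$ case of the identity $g^n = \sum_{\mu \vdash n} f^\mu\, s_\mu[g]$ recalled in the introduction (using $f^{(2)} = f^{(1,1)} = 1$). Combining this with Proposition~\ref{Power} gives
\[
s_2[h_\lambda] + s_{11}[h_\lambda] \;=\; \sum_{\nu \vdash 2|\lambda|} K^\nu_{\lambda^2}\, s_\nu.
\]

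Next, the RSK bijection of Proposition~\ref{prop:RSK} labels each of the $K^\nu_{\lambda^2}$ copies of $s_\nu$ on the right by a distinct tableau $Q$ of shape $\nu$ and content $\lambda^2$. Before invoking the sign statistic, I would briefly justify that it is well defined, i.e.\ that each rectified subtableau $Q_i = \text{Rect}(Q^{(i)})$ has a two-row shape of the form $(2\lambda_i - j_i, j_i)$. This follows from Corollary~\ref{theorem:rectificationOfQi}: $Q_i$ is the recording tableau produced by RSK on the sub-biword coming from the pair $(t_{2i-1}, t_{2i})$ of rows of length $\lambda_i$, and the Pieri-rule computation $h_{\lambda_i}^2 = \sum_{j_i = 0}^{\lambda_i} s_{(2\lambda_i - j_i,\, j_i)}$ forces exactly the asserted two-row shapes. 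Hence every $Q$ carries a well-defined sign, and the set of $Q$-tableaux of shape $\nu$ and content $\lambda^2$ splits as a disjoint union of the two subsets of sizes $(K^\nu_{\lambda^2})^+$ and $(K^\nu_{\lambda^2})^-$, giving $K^\nu_{\lambda^2} = (K^\nu_{\lambda^2})^+ + (K^\nu_{\lambda^2})^-$.

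Finally, by Part~1 of the preceding theorem the copy of $s_\nu$ indexed by $Q$ belongs to $s_2[h_\lambda]$ when $\text{sign}(Q) = +1$ and to $s_{11}[h_\lambda]$ when $\text{sign}(Q) = -1$. Regrouping the copies of $s_\nu$ on the right-hand side of the displayed identity according to the sign of their indexing $Q$-tableau therefore yields the two announced identities simultaneously, since both sums on the left are determined by disjoint subsets of the indexing set. The genuine obstacle lies in the per-$Q$ classification of the preceding theorem — proving it requires the tableau-product description of RSK together with an analysis of how $h_\lambda^2 = \prod_i h_{\lambda_i}^2$ factors through the Pieri expansions $h_{\lambda_i}^2 = \sum_{j_i} s_{(2\lambda_i - j_i,\, j_i)}$ and how the parity of $j_i$ determines membership in $s_2[h_{\lambda_i}]$ versus $s_{11}[h_{\lambda_i}]$. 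Once that input is granted, the present corollary reduces to the bookkeeping step described above.
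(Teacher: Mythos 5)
Your proposal is correct and follows essentially the same route as the paper: the paper also treats this counting statement as immediate bookkeeping once the per-$Q$ classification (Theorem 1, Part 1) is established, combining $h_\lambda^2 = s_2[h_\lambda]+s_{11}[h_\lambda]$ with the Kostka expansion of Proposition~\ref{Power} and splitting the $Q$-tableaux by sign. Your added justification that each $Q_i=\text{Rect}(Q^{(i)})$ has two-row shape $(2\lambda_i-j_i,j_i)$, via Corollary~\ref{theorem:rectificationOfQi} and the Pieri rule, matches the well-definedness argument the paper defers to its proof section.
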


\begin{ex}
Let's see how theorem 1 in action. The four tableaux of shape $\nu = (3,2,1)$ and content $(2,1)^2 = (2,2,1,1)$ are illustrated below. We thank Franco Saliola for providing the figure.  The computed signs tell us that $s_2[h_{21}]$ and $s_{11}[h_{21}]$ both contain two copies of $s_{321}$.

\vspace{2em}
\hspace{-3.5em}
\begin{tabular}{cccccrc}
    $Q$ & $Q^{(1)}$ & $\mathrm{Rect}(Q^{(1)})$ & $Q^{(2)}$ & $\mathrm{Rect}(Q^{(2)})$ & $\mathrm{sign}(Q)$ & contributes to\\ \hline
    \\
    $\young(112,23,4)$ & $\young(112,2)$ & $\young(112,2)$ & $\gyoung(:;3,;4)$ & $\young(3,4)$ & $(-1)^{1+1} = 1$ & $s_{2}[h_{21}]$
    \\[2ex]
    $\young(112,24,3)$ & $\young(112,2)$ & $\young(112,2)$ & $\gyoung(:;4,;3)$ & $\young(34)$ & $(-1)^{1+0} = -1$ & $s_{11}[h_{21}]$
    \\[2ex]
    $\young(113,22,4)$ & $\young(11,22)$ & $\young(11,22)$ & $\gyoung(::;3,;4)$ & $\young(3,4)$ & $(-1)^{2+1} = -1$ & $s_{11}[h_{21}]$
    \\[2ex]
    $\young(114,22,3)$ & $\young(11,22)$ & $\young(11,22)$ & $\gyoung(::;4,;3)$ & $\young(34)$ & $(-1)^{2+0} = 1$ & $s_{2}[h_{21}]$
\end{tabular}

\end{ex}

\subsection{Proof of theorem 1}
\label{section:proofs}

In order to prove Theorem~\ref{thm:signTableauQ}, we need to introduce basic properties of plethysm.

The plethysm of two symmetric functions 
can be defined using the following properties and power sum symmetric functions \cite{Macdonald}.\\ 
Let $f,g,h$ be symmetric functions :
\begin{itemize}
    \item $p_k[p_m] = p_{km}$
    \item $p_k[f + g] = p_k[f] + p_k[g]$
    \item $p_k[f\cdot g] = p_k[f] \cdot p_k[g]$
    \item $(f + g)[h] = f[h] + g[h]$
    \item $(f\cdot g)[h] = f[h] \cdot g[h]$.
\end{itemize}

The proof that the sign $(-1)^{j_i}$ of each $Q_i$ is well defined rests on the following result introduced by Littlewood in \cite{Littlewood2}.

\begin{prop}


The plethysms $s_2[h_n]$, $s_{11}[h_n]$ in $h_n^2$ decompose into the Schur basis as 
\[
s_{2}[h_n] = h_{2}[h_n] = \sum_{j=0}^{\lfloor \frac{n}{2} \rfloor} s_{(2n-2j,2j)},
\]
\[
s_{11}[h_n] = e_{2}[h_n] = \sum_{j=0}^{\lfloor \frac{n-1}{2} \rfloor} s_{(2n-(2j+1),(2j+1))}.
\]
%
%
\label{thm:ShapeEvenOdd}
\end{prop}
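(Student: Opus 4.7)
The plan is to reduce the statement to a single explicit computation of $p_2[h_n]$ by first expressing $s_2$ and $s_{11}$ in the power-sum basis. Since $s_2 = h_2 = \tfrac{1}{2}(p_1^2 + p_2)$ and $s_{11} = e_2 = \tfrac{1}{2}(p_1^2 - p_2)$, the distributive and multiplicative rules for plethysm listed above, together with $p_1[h_n] = h_n$, give
\begin{align*}
s_2[h_n] &= \tfrac{1}{2}\bigl(h_n^2 + p_2[h_n]\bigr),\\
s_{11}[h_n] &= \tfrac{1}{2}\bigl(h_n^2 - p_2[h_n]\bigr).
\end{align*}
The Pieri rule already yields $h_n^2 = \sum_{j=0}^n s_{(2n-j,j)}$, so the whole proposition reduces to the single identity
\[
p_2[h_n] = \sum_{j=0}^n (-1)^j s_{(2n-j,j)}.
\]

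To establish this identity I would specialize the Cauchy identity $\sum_\lambda s_\lambda(x)\,s_\lambda(y) = \prod_{i,j}(1 - x_i y_j)^{-1}$ to the two-variable alphabet $y = (y,-y)$. The right-hand side then collapses to $\prod_i (1 - x_i^2 y^2)^{-1}$, which is precisely $\sum_{n \ge 0} p_2[h_n](x)\, y^{2n}$, since $p_2$ acts on symmetric functions by the substitution $x_i \mapsto x_i^2$. On the left, $s_\lambda(y,-y)$ vanishes unless $\lambda$ has at most two parts, and for $\lambda = (a,b)$ the bialternant formula gives $s_{(a,b)}(y,-y) = (-1)^b y^{a+b}$ when $a-b$ is even and $0$ otherwise. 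Extracting the coefficient of $y^{2n}$ on both sides, and noting that $a+b = 2n$ automatically forces $a-b$ to be even, recovers the desired identity.

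Once $p_2[h_n]$ is known, finishing is mechanical: the combinations $\tfrac{1}{2}(h_n^2 \pm p_2[h_n])$ pick out precisely the Schur functions $s_{(2n-j,j)}$ with $j$ even and with $j$ odd, respectively, and reparametrizing $j = 2k$ and $j = 2k+1$ produces the two stated formulas. The main obstacle is the evaluation of $p_2[h_n]$; the Cauchy-identity specialization above is the cleanest route, but one could alternatively proceed by induction on $n$ using the Jacobi--Trudi formula $s_{(a,b)} = h_a h_b - h_{a+1} h_{b-1}$ together with Newton's identity $n\,h_n = \sum_{k=1}^n h_{n-k}\,p_k$ to verify the alternating sum recursively.
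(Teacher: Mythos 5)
Your proposal is correct and follows essentially the same route as the paper: both reduce the statement to the identity $p_2[h_n] = \sum_{j=0}^n (-1)^j s_{(2n-j,j)}$ via $s_2 = \tfrac{1}{2}(p_1^2+p_2)$, $s_{11} = \tfrac{1}{2}(p_1^2-p_2)$, and the Pieri expansion of $h_n^2$. The only difference is that the paper simply cites Macdonald for the $p_2[h_n]$ formula, whereas you supply a correct self-contained derivation of it via the Cauchy identity specialized at $y=(y,-y)$; this is a welcome addition but does not change the structure of the argument.
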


\begin{proof}[Idea of proof]
The proof 
follows directly from the transition matrix between the different basis, and basis properties of plethysm. It can be found in chapter 1, section 8, of Macdonald \cite{Macdonald}.  
%
For completeness, we include it here.
The following formula is proved in MacDonald \cite{Macdonald} :
\[
p_2[h_n] = \displaystyle \sum_{j=0}^n (-1)^j s_{(2n-j,j)}
\]

For $s_2[h_n]$, as $s_2 = h_2 = \frac{1}{2}p_1^2 + \frac{1}{2}p_2$, we have:
\begin{align*}
h_2[h_n]    &= \left( \frac{1}{2}p_1^2 + \frac{1}{2}p_2 \right)[h_n] \\
           &= \frac{1}{2}p_1^2[h_n] + \frac{1}{2}p_2[h_n] \\
           &= \frac{1}{2}h_n^2 + \frac{1}{2}p_2[h_n] \\
           &= \frac{1}{2}\displaystyle \sum_{i=0}^n s_{(2n-i,i)} + \frac{1}{2}\displaystyle \sum_{i=0}^n (-1)^i s_{(2n-i,i)} \\
        &= \sum_{j=0}^{\lfloor \frac{n}{2} \rfloor} s_{(2n-2j,2j)}
\end{align*}
The other expression is obtained in the same way.
\end{proof}

Applying this result in our context, we obtain: 

\begin{cor}
Let $t = (t_1, t_2)$ be a $(n)^2$-tuple of row tableaux, $W_t$ the associated biword and $(P,Q)$ the associated pair of tableaux of the same shape $(2n-j,j)$, with $0 \leq j \leq n$. The associated monomial $x^P = x^t$ appears in $s_{(2n-j,j)}$ in $h_n^2$, 
which contributes to $s_2[h_n]$ if $\text{sign}(Q) = (-1)^j$ is positive, and to $s_{11}[h_n]$ if $\text{sign}(Q)$ is negative.
\end{cor}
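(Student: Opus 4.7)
The plan is to reduce the corollary directly to Proposition~\ref{thm:ShapeEvenOdd}. First, by the Pieri rule (or the special case $\lambda=(n)$ of Proposition~\ref{Power}),
\[
h_n^2 = \sum_{j=0}^{n} s_{(2n-j,j)},
\]
so every two-row Schur function $s_{(2n-j,j)}$ appears with multiplicity exactly one in $h_n^2$. By Proposition~\ref{prop:RSK}, the biword $W_t$ attached to a $(n)^2$-tuple $t$ of row tableaux corresponds under RSK to a pair $(P,Q)$ of tableaux of a common shape $\nu$, and the top word $1^n 2^n$ of $W_t$ forces $Q$ to have content $(n,n)$. A semistandard tableau whose entries are only $1$s and $2$s can occupy at most two rows, so $\nu = (2n-j,j)$ for some $0 \le j \le n$, and $x^t = x^P$ contributes to the unique copy of $s_{(2n-j,j)}$ in the Schur expansion of $h_n^2$.

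Second, using the basic plethystic identities recalled just before the corollary,
\[
h_n^2 = (p_1[h_n])^2 = p_1^2[h_n] = (s_2 + s_{11})[h_n] = s_2[h_n] + s_{11}[h_n],
\]
so the unique copy of $s_{(2n-j,j)}$ just located must lie in exactly one of the two plethysms. By Proposition~\ref{thm:ShapeEvenOdd}, $s_{(2n-j,j)}$ appears in $s_2[h_n]$ precisely when $j$ is even, and in $s_{11}[h_n]$ precisely when $j$ is odd.

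Finally, one checks that in the case $\lambda = (n)$, the sign statistic reduces to $\text{sign}(Q) = (-1)^j$: the subtableau $Q^{(1)}$ consists of all entries $1$ and $2$, hence of all of $Q$, and $Q$ being already straight gives $Q_1 = \mathrm{Rect}(Q^{(1)}) = Q$ of shape $(2n-j_1,j_1) = (2n-j,j)$, so $j_1 = j$. Thus $\text{sign}(Q) = +1$ iff $j$ is even iff $x^P$ contributes to $s_2[h_n]$, and the negative case is symmetric. There is essentially no obstacle here: all the substantive content sits in Proposition~\ref{thm:ShapeEvenOdd}, and the corollary is a bookkeeping match between the shape output of RSK, the Pieri expansion of $h_n^2$, and the parity decomposition of $h_n^2 = s_2[h_n] + s_{11}[h_n]$.
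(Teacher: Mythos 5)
Your proof is correct and follows essentially the same route as the paper, which presents this corollary as an immediate consequence of Proposition~\ref{thm:ShapeEvenOdd} (the multiplicity-one Pieri expansion of $h_n^2$, the decomposition $h_n^2 = s_2[h_n]+s_{11}[h_n]$, and the observation that for $\lambda=(n)$ the tableau $Q$ has content $(n,n)$, hence two-row shape $(2n-j,j)$ and $\mathrm{sign}(Q)=(-1)^j$). Your write-up simply makes explicit the bookkeeping the paper leaves implicit.
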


The other ingredient needed to prove our theorem is the following result:

\begin{prop}
Let $g_1,g_2, \ldots, g_n$ be any symmetric functions. Then:
\[
(g_1g_2\ldots g_n)^2 = s_2[g_1g_2\ldots g_n] + s_{11}[g_1g_2\ldots g_n],
\]
where
\[
s_2[g_1g_2\ldots g_n] = \sum_{\substack{I\subseteq \{ 1, \ldots , n \} \\ |I| \text{ even }}} \prod_{i\in I} s_{11}[g_i]\cdot \prod_{j\in I^c} s_{2}[g_j],
\]
\[s_{11}[g_1g_2\ldots g_n] = \sum_{\substack{I\subseteq \{ 1, \ldots , n \} \\ |I| \text{ odd }}} \prod_{i\in I} s_{11}[g_i]\cdot \prod_{j\in I^c} s_{2}[g_j].
\]
\label{thm:SymAntiSym}
\end{prop}

\begin{proof}

As $g^2 = s_2[g]+s_{11}[g]$,
 we have
\begin{align*}
(g_1g_2\ldots g_n)^2    &= g_1^2g_2^2\ldots g_n^2 \\
                        &= \prod_{i=1}^n (s_2[g_i] + s_{11}[g_i])
\end{align*}

We will prove the result by induction on $n$. 

For general symmetric functions $g_1, g_2$, it is possible to prove that  $s_2[g_1g_2] = s_{2}[g_1]s_2[g_2] + s_{11}[g_1] s_{11}[g_2]$, while $s_{11}[g_1g_2] = s_{11}[g_1]s_2[g_2] + s_{2}[g_1] s_{11}[g_2]$, using only basic properties of plethysm (see for example \cite{Macdonald}). 

Suppose now we have the result for $n$ symmetric functions, and we show it holds for $n+1$ symmetric functions $g_1, g_2, \ldots, g_{n+1}$. 

Recall that the product of symmetric functions $g_1, g_2, \ldots, g_n$ is symmetric.
\begin{align*}
s_2[g_1 g_2\ldots g_n g_{n+1}] 
=& s_{2}[g_1 g_2\ldots g_n]s_2[g_{n+1}] + s_{11}[g_1 g_2\ldots g_n]s_{11}[g_{n+1}] \\
=& \left( \sum_{\substack{I\subseteq \{ 1, \ldots , n \} \\ |I| \text{ even }}} \prod_{i\in I} s_{11}[g_i]\cdot \prod_{j\in I^c} s_{2}[g_j] \right) s_2[g_{n+1}] \\
&+ \left( \sum_{\substack{I\subseteq \{ 1, \ldots , n \} \\ |I| \text{ odd }}} \prod_{i\in I} s_{11}[g_i]\cdot \prod_{j\in I^c} s_{2}[g_j] \right) s_{11}[g_{n+1}]\\
=& \sum_{\substack{I\subseteq \{ 1, \ldots , n+1 \} \\ |I| \text{ even }}} \prod_{i\in I} s_{11}[g_i]\cdot \prod_{j\in I^c} s_{2}[g_j].
\end{align*}

The proof for $s_{11}[g_1 g_2\ldots g_n g_{n+1}]$ is similar.
\end{proof}

\begin{proof}[Proof of Theorem 1]
As seen in proposition \ref{thm:ShapeEvenOdd}, the shape $(2\lambda_i-j_i,j_i)$ of the $Q_i$ and the sign $(-1)^{j_i}$ determine to which plethysm the associated Schur function $s_{(2\lambda_i-j_i,j_i)}$ contributes in $h_{\lambda_i}^2$.
%
$Q$ then encodes that the associated Schur function $s_\nu$ appears in the product $s_{(2\lambda_1-j_1,j_1)} s_{(2\lambda_2-j_2,j_2)} \ldots s_{(2\lambda_\ell-j_\ell,j_\ell)}$ in $h_\lambda^2 = h_{\lambda_1}^2h_{\lambda_2}^2 \ldots h_{\lambda_\ell}^2 = \prod_{i=1}^\ell (s_2[h_{\lambda_i}] + s_{11}[h_{\lambda_i}])$. 

Let the set $I = \{ \ i \ | \ j_i \text{ odd }   \}$ record the anti-symmetric parts that have been picked.
The sign of $Q$ relies only on the parity of the cardinality of $I$ :  $\text{sign}(Q) = \prod_i \text{sign}(Q_i) = \prod_i (-1)^{j_i} = (-1)^{|I|}$.

Therefore, the sign of such a tableau $Q$ is well defined and indicates the participation of the associated Schur function to a plethysm.
\end{proof}

We can use the same ideas to show a more general result. Let $\mu$ be another partition, and consider the product $s_{\mu}h_{\lambda}^2$. Iteratively applying the Pieri rule, as in proposition \ref{Power}, we have
\[
s_{\mu}h_{\lambda}^2 = \displaystyle \sum_{\nu \vdash 2|\lambda| + |\mu|} K^{\nu / \mu}_{\lambda^2} s_{\nu},
\]
where $K^{\nu / \mu}_{\lambda^2}$ counts the number of tableaux of shape $\nu / \mu$ and content $\lambda^2$. Using the very same ideas and notations developed in this section, we obtain the following corollary:

\begin{cor}
Let $Q$ be a tableau of shape $\nu/\mu$ and content $\lambda^2$, for $\nu\vdash 2|\lambda| + |\mu|$, and fixed partitions $\mu$ and $\lambda = (\lambda_1, \ldots, \lambda_\ell)$. Let $Q^{(i)}$ be the subtableau of $Q$ with entries $2i-1$ and $2i$. Then, $Q_i = \text{Rect}(Q^{(i)})$ is of shape $(2\lambda_i-j_i, j_i)$, for $1\leq i \leq \ell$. Define the sign of $Q$ to be $\text{sign}(Q) = \prod_{i=1}^{\ell} (-1)^{j_i}$. \\

The copy of $s_\nu$ indexed by $Q$ in $s_{\mu}h_\lambda^2$ lies in $s_{\mu}(s_2[h_\lambda])$ if $\text{sign}(Q) = 1$, and in $s_{\mu}(s_{11}[h_\lambda])$ if $\text{sign}(Q) = -1$.
\end{cor}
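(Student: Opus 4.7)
The plan is to reduce this corollary to Theorem~\ref{thm:signTableauQ} by treating $s_\mu$ as an inert multiplicative factor and running the same sign analysis on the $h_\lambda^2$ part. First I would invoke the iterated Pieri rule, exactly as in the proof of Proposition~\ref{Power}, to justify the stated decomposition
\[
s_\mu h_\lambda^2 = \sum_{\nu \vdash 2|\lambda| + |\mu|} K^{\nu/\mu}_{\lambda^2}\, s_\nu,
\]
where $K^{\nu/\mu}_{\lambda^2}$ counts tableaux of skew shape $\nu/\mu$ and content $\lambda^2$. Each such $Q$ encodes a sequence of horizontal-strip additions starting from $\mu$, and therefore indexes a unique copy of $s_\nu$ in the expanded product $s_\mu \cdot h_{\lambda_1} h_{\lambda_1} \cdots h_{\lambda_\ell} h_{\lambda_\ell}$.

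Next I would verify the skew analogue of Corollary~\ref{theorem:rectificationOfQi}: for any such $Q$, the sub-skew-tableau $Q^{(i)}$ obtained by keeping only the entries $2i-1$ and $2i$ rectifies, under jeu de taquin, to a well-defined two-row tableau $Q_i$ of shape $(2\lambda_i - j_i, j_i)$ for some $0 \le j_i \le \lambda_i$. This rests on two standard facts: jeu de taquin is well-defined and content-preserving on arbitrary skew shapes (not just those with empty inner shape), and any tableau with only two distinct entries has at most two rows. By Proposition~\ref{thm:ShapeEvenOdd}, the shape $(2\lambda_i - j_i, j_i)$ of $Q_i$ then identifies the Schur factor $s_{(2\lambda_i - j_i, j_i)}$ appearing in $h_{\lambda_i}^2$ as contributing to $s_2[h_{\lambda_i}]$ if $j_i$ is even and to $s_{11}[h_{\lambda_i}]$ if $j_i$ is odd.

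Finally I would apply Proposition~\ref{thm:SymAntiSym} with $g_i = h_{\lambda_i}$ to split
\[
h_\lambda^2 = (h_{\lambda_1} \cdots h_{\lambda_\ell})^2 = s_2[h_\lambda] + s_{11}[h_\lambda],
\]
where $s_2[h_\lambda]$ (respectively $s_{11}[h_\lambda]$) collects exactly those terms $\prod_{i \in I} s_{11}[h_{\lambda_i}] \cdot \prod_{j \notin I} s_2[h_{\lambda_j}]$ with $|I|$ even (respectively odd). Multiplying throughout by $s_\mu$, the skew tableau $Q$ selects precisely the term indexed by $I = \{i : j_i \text{ odd}\}$, so the copy of $s_\nu$ it indexes lies in $s_\mu s_2[h_\lambda]$ exactly when $|I|$ is even, which is exactly the condition $\text{sign}(Q) = \prod_i (-1)^{j_i} = 1$.

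The main obstacle is making rigorous the claim that the presence of the inert factor $s_\mu$ does not interfere with the sub-tableau decomposition of $Q$. Concretely, one has to verify that given a chain $\mu = \mu^{(0)} \subset \mu^{(1)} \subset \cdots \subset \mu^{(\ell)} = \nu$ of partitions with $\mu^{(i)}/\mu^{(i-1)}$ a horizontal-strip pair matching $h_{\lambda_i}^2$, the rectification of $Q^{(i)}$ depends only on the $i$-th pair of strips and recovers the shape $(2\lambda_i - j_i, j_i)$ forced by the Pieri rule on $h_{\lambda_i}^2$. This is essentially the same Fulton-style argument from chapter~5 of~\cite{Fulton} underlying Corollary~\ref{theorem:rectificationOfQi}, applied to a skew outer shape; once established, the rest of the proof is a direct transcription of the argument for Theorem~\ref{thm:signTableauQ}.
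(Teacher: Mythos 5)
Your proposal is correct and follows essentially the same route as the paper, which proves this corollary only by remarking that the iterated Pieri rule gives $s_\mu h_\lambda^2 = \sum_\nu K^{\nu/\mu}_{\lambda^2} s_\nu$ and that ``the very same ideas'' from the proof of Theorem~1 (rectification of the subtableaux $Q^{(i)}$, Proposition~\ref{thm:ShapeEvenOdd}, and Proposition~\ref{thm:SymAntiSym}) carry over with $s_\mu$ as an inert factor. Your explicit flagging of the skew analogue of the Fulton recording-tableau argument is a useful elaboration of a step the paper leaves implicit, but it is not a departure from the paper's approach.
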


\section{Associating a $\widetilde{Q}$-tableau indexing a copy of $s_\nu$ in $e_\lambda^2$ to a plethysm}
\label{section:e}
\setcounter{theo*}{0}
All of the above can be translated into the realm of elementary symmetric functions.
We have seen that the definition of elementary symmetric functions is very similar to that of homogeneous symmetric functions. Their Pieri rules are conjugate, and so applying them iteratively give conjugate tableaux.  

Therefore $e_\lambda^2 = \displaystyle \sum_\nu K^{\nu'}_{\lambda^2} s_{\nu}$, where $K^{\nu'}_{\lambda^2}$ is the number of tableaux of conjugate shape $\nu'$ and content $\lambda^2$.

We use a variation of the RSK algorithm to define the bijection of $\lambda$-tuples of column tableaux and pairs of "tableaux" of the same shape. This variant is called the RSK' algorithm in \cite{Krattenthaler}, but to avoid confusion with the conjugate, we call it the $\widetilde{\text{RSK}}$ algorithm. 

It works in the same way as RSK, but is defined on \textit{Burge words} : biwords $\widetilde{W}$ such that no bi-letters appear twice, and with its bi-letters $\big($\begin{tabular}{c} $u_i$ \\ $v_i$ \\ \end{tabular}$\big)$ ordered such that the $u_i$ weakly increase (as before), but for $u_i = u_{i+1}$, $v_i > v_{i+1}$.\\ 

The pair of "tableaux" of the same shape $(\widetilde{P},\widetilde{Q}) = \widetilde{\text{RSK}}(\widetilde{W}_t)$ obtained is such that the conjugate $\widetilde{Q}'$ of $\widetilde{Q}$ is a semistandard tableau. We call $\widetilde{Q}$ a conjugate tableau. The tableau $\widetilde{Q}'$ then has shape conjugate to that of $\widetilde{P}$. The $\widetilde{RSK}$ algorithm establishes a bijection between Burge words and pairs tableaux/conjugate tableaux of the same shape. Note that the bijection between $\lambda$-tuples of column tableaux and Burge words follows the same construction we have seen before, with the top word being $\lambda$, and the bottom word being the reading words of the column tableaux.

\begin{ex}
Lef $t = (\young(1,2,3,5,7),\young(1,3,4,6,8),\young(2,3,5),\young(1,2,4))$ be a $(5,3)^2$-tuple of column tableaux. 
\vspace{-1em}
Then $\widetilde{W}_t = \big( \begin{tabular}{cccc} 11111&22222& 333 & 444 \\ 75321 & 86431 & 532 & 421 \\ \end{tabular}\big)$ and $(\widetilde{P},\widetilde{Q}) = (\young(1112,2234,335,46,58,7),\young(1234,1234,123,12,12,4) )$.
\end{ex}

We have that 
\begin{cor}
Let $t$ a $\lambda^2$-tuple of column tableaux, $\widetilde{W}_t$ the associated Burge word and $(\widetilde{P},\widetilde{Q}) = \widetilde{\text{RSK}}(\widetilde{W}_t)$. Let $(\widetilde{T_i},\widetilde{Q_i}) = \widetilde{\text{RSK}}\big(  \begin{tabular}{cccccccc} $ 2i-1 \ldots 2i-1 $ & $ 2i \ldots 2i $ \\ word($t_{2i-1}$) & word($t_{2i}$) \end{tabular}  \big)$, and $\widetilde{Q^{(i)}}$ the subtableau of $\widetilde{Q}$ containing entries $2i-1$ and $2i$, for $1\leq i \leq \ell(\lambda)$.

Then $\widetilde{Q_i}' = \text{Rect}((\widetilde{Q^{(i)}})')$.
\end{cor}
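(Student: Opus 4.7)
The plan is to mirror the argument behind Corollary~\ref{theorem:rectificationOfQi}, replacing $\text{RSK}$ with its Burge-word variant $\widetilde{\text{RSK}}$. First I would split $\widetilde{W}_t$ into the consecutive sub-Burge-words $\widetilde{W}_t^{(i)}$ whose top letters are $2i-1$ or $2i$. Since the top row of any Burge word is weakly increasing, $\widetilde{\text{RSK}}(\widetilde{W}_t)$ can be computed by inserting these blocks one after the other; the cells of $\widetilde{Q}$ labelled $2i-1$ or $2i$ are exactly those added during the insertion of $\widetilde{W}_t^{(i)}$, so they form the skew sub-``tableau'' $\widetilde{Q^{(i)}}$, while $\widetilde{W}_t^{(i)}$ read in isolation produces the pair $(\widetilde{T_i}, \widetilde{Q_i})$.

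The heart of the argument is a Burge-word analogue of the Chapter~5 result of \cite{Fulton} used to justify Corollary~\ref{theorem:rectificationOfQi}: when a word is inserted into an already existing tableau, the recording skew tableau of the added cells rectifies under jeu de taquin to the recording tableau obtained by inserting that word into the empty tableau. The subtlety in the $\widetilde{\text{RSK}}$ setting is that recording tableaux are now conjugate tableaux, whereas jeu de taquin is only defined on genuine (skew) semistandard tableaux. I would therefore transpose before rectifying, so that the identity takes the stated form $\widetilde{Q_i}' = \text{Rect}((\widetilde{Q^{(i)}})')$.

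To prove the Fulton-type statement in the Burge setting, I would adapt Fulton's argument: invert each bi-letter of a Burge word (swapping top and bottom) and resort lexicographically. The resulting object is an ordinary biword whose bottom word, when read in reverse, is Knuth-equivalent to the reading word of the conjugate recording tableau. Combined with the invariance of Knuth classes under jeu de taquin and the fact that a straight-shape semistandard tableau is determined by its Knuth class, this yields the needed rectification identity. The main obstacle is managing the dualisation carefully so that the Burge ordering (strict decrease on equal top letters) maps correctly to the ordinary lexicographic ordering after swapping rows; it is precisely this dualisation that forces the statement to involve $(\widetilde{Q^{(i)}})'$ rather than $\widetilde{Q^{(i)}}$ directly. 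Once this Burge analogue is in hand, the corollary follows by applying it to each index $i$ in turn, exactly as in the proof of Corollary~\ref{theorem:rectificationOfQi}.
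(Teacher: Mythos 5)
Your proposal follows the same route as the paper's proof: adapting Fulton's Chapter 5 recording-tableau result to $\widetilde{\text{RSK}}$ by inverting the bi-letters of the Burge word, resorting lexicographically, and using the fact that the reversed bottom word is Knuth-equivalent to the reading words of both conjugate recording tableaux, with the conjugation inserted precisely because jeu de taquin is only defined on genuine skew tableaux. This matches the paper's argument in both structure and detail.
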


\begin{proof}
In order to get this result, we need to adapt the proof of Fulton's result (\cite{Fulton}, chapter 5) to $\widetilde{\text{RSK}}$. We can invert all bi-letters of a Burge word and rearrange them in lexicographical order (so that it is a biword). This gives a bottom word such that its reverse is Knuth-equivalent to the reading words of both $\widetilde{Q_i}'$ and $\widetilde{Q^{(i)}}'$. It is the same strategy that Fulton uses in his proof of the RSK case.
The final rectification is applied on the conjugate of the conjugate tableau $\widetilde{Q^{(i)}}$, because jeu de taquin is not well defined otherwise. 
\end{proof}

This allows us to also define a sign statistic on the $\widetilde{Q}$-tableaux which will allow us to discriminate in which plethysm the associated copy of $s_\nu $ lies :

\begin{theo*}[Part 2]
Let $\widetilde{Q}$ be a conjugate tableau of shape $\nu\vdash 2|\lambda|$ and content $\lambda^2$, for a fixed partition $\lambda = (\lambda_1,\ldots, \lambda_\ell)$. Let $\widetilde{Q^{(i)}}$ be the subtableau of $\widetilde{Q}$ with entries $2i-1$ and $2i$, for $1\leq i \leq \ell$. Then $\widetilde{Q_i} = (\text{Rect}(\widetilde{Q^{(i)}}'))'$ is of shape $(2^{\lambda_i-j_i}, 1^{2j_i})$. Define the (conjugate) sign of $\widetilde{Q}$ to be $\text{sign}_c(\widetilde{Q}) = \prod_{i=1}^{\ell} (-1)^{j_i}$.\\

The copy of $s_\nu$ indexed by $\widetilde{Q}$ in $e_\lambda^2$ lies in $s_2[e_\lambda]$ if $\text{sign}_c(\widetilde{Q}) = 1$, and in $s_{11}[e_\lambda]$ if $\text{sign}_c(\widetilde{Q}) = -1$.
\end{theo*}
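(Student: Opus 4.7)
The plan is to mirror the three-step proof of Part 1 (Theorem 1 for $h_\lambda$) presented in Section 3.3, replacing RSK by $\widetilde{\text{RSK}}$, the row Pieri rule by the column Pieri rule, and $h_n$ by $e_n$ throughout. The three ingredients needed are: (i) an $e_n$-analogue of Proposition \ref{thm:ShapeEvenOdd} pinpointing which two-column shapes appear in $s_2[e_n]$ versus $s_{11}[e_n]$; (ii) the product-splitting formula of Proposition \ref{thm:SymAntiSym}, which already holds for arbitrary symmetric functions $g_i$ and so applies verbatim to $g_i = e_{\lambda_i}$; and (iii) the corollary just above, stating $\widetilde{Q_i} = (\text{Rect}(\widetilde{Q^{(i)}}'))'$, which ensures that the shape $(2^{\lambda_i - j_i}, 1^{2j_i})$ extracted from $\widetilde{Q}$ genuinely records which of $s_2[e_{\lambda_i}]$ or $s_{11}[e_{\lambda_i}]$ is picked from that factor.

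For step (i), the cleanest route is to apply the involution $\omega$ to Proposition \ref{thm:ShapeEvenOdd}, using $\omega(h_n) = e_n$, $\omega(s_\mu) = s_{\mu'}$, and the fact that the conjugate of $(2n-k, k)$ is $(2^k, 1^{2n-2k})$. One has to track that $\omega(f[g]) = f[\omega g]$ when $\deg g$ is even and $\omega(f[g]) = (\omega f)[\omega g]$ when $\deg g$ is odd, so that $\omega(s_2[h_n])$ equals $s_2[e_n]$ when $n$ is even but $s_{11}[e_n]$ when $n$ is odd; however, the reparametrisation $j = n-k$ sends "$k$ even" to "$j$ even" when $n$ is even and to "$j$ odd" when $n$ is odd, so in both cases one obtains the uniform statement
\[
s_2[e_n] = \sum_{j \text{ even}} s_{(2^{n-j}, 1^{2j})}, \qquad s_{11}[e_n] = \sum_{j \text{ odd}} s_{(2^{n-j}, 1^{2j})}.
\]
An equivalent, self-contained derivation imitates the proof of Proposition \ref{thm:ShapeEvenOdd}: compute $p_2[e_n]$ as $\omega(p_2[h_n])$, then combine with $p_1^2[e_n] = e_n^2 = \sum_j s_{(2^{n-j}, 1^{2j})}$ using $s_2 = \tfrac{1}{2}(p_1^2 + p_2)$ and $s_{11} = \tfrac{1}{2}(p_1^2 - p_2)$.

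With (i) in hand, the rest of the argument follows the proof of Part 1 line by line. Each $\widetilde{Q_i}$ of shape $(2^{\lambda_i - j_i}, 1^{2j_i})$ labels a copy of its Schur function in $s_2[e_{\lambda_i}]$ when $j_i$ is even and in $s_{11}[e_{\lambda_i}]$ when $j_i$ is odd. Proposition \ref{thm:SymAntiSym}, applied to $g_i = e_{\lambda_i}$, then shows that the product $s_{(2^{\lambda_1 - j_1}, 1^{2j_1})} \cdots s_{(2^{\lambda_\ell - j_\ell}, 1^{2j_\ell})}$ contributes to $s_2[e_\lambda]$ exactly when the set $I = \{i : j_i \text{ odd}\}$ has even cardinality, and to $s_{11}[e_\lambda]$ otherwise. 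The corollary relating $\widetilde{Q_i}$ to $\widetilde{Q^{(i)}}$ guarantees that $\text{sign}_c(\widetilde{Q}) = \prod_i (-1)^{j_i} = (-1)^{|I|}$ is read off directly from $\widetilde{Q}$ itself and equals $+1$ or $-1$ according to the plethystic component that receives the copy of $s_\nu$ indexed by $\widetilde{Q}$.

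The main obstacle is the bookkeeping in step (i): because $\omega$ interchanges $h_2 = s_2$ and $e_2 = s_{11}$ whenever the inner argument has odd degree, a naive application of $\omega$ would appear to flip the correspondence between $(-1)^{j_i}$ and the symmetric/antisymmetric part, and it is only the simultaneous flip of parities coming from $j = n-k$ that restores the clean uniform statement. A secondary check, more mechanical, is that the adaptation of Fulton's argument (\cite{Fulton}, ch.5) used in the corollary above really produces $\widetilde{Q_i}' = \text{Rect}(\widetilde{Q^{(i)}}')$; this is the point where one must verify that reversing each bi-letter of the Burge word, then reordering lexicographically, yields a bottom word whose reverse is Knuth-equivalent to the reading words of both $\widetilde{Q_i}'$ and $\widetilde{Q^{(i)}}'$, and that the final rectification has to be performed on the conjugate of the conjugate tableau for jeu de taquin to be well defined.
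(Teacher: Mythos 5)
Your proposal is correct and takes essentially the same route as the paper, which proves the statement by combining exactly your three ingredients: the $e_n$-analogue of Proposition~\ref{thm:ShapeEvenOdd} (namely Proposition~\ref{thm:ShapeEvenOdde}, obtained from Macdonald's formula $p_2[e_n] = \sum_{j}(-1)^j s_{(2^{n-j},1^{2j})}$ together with $e_2 = \frac{1}{2}p_1^2 - \frac{1}{2}p_2$), Proposition~\ref{thm:SymAntiSym} applied to $g_i = e_{\lambda_i}$, and the $\widetilde{\text{RSK}}$ adaptation of Fulton's recording-tableau result. The only caveat is cosmetic: in your self-contained variant of step (i), $\omega(p_2[h_n])$ equals $(-1)^n p_2[e_n]$ rather than $p_2[e_n]$, but this sign is absorbed by the same reindexing $j = n-k$ you already track, and your final uniform formulas for $s_2[e_n]$ and $s_{11}[e_n]$ agree with the paper's.
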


This give us the following theorem, second part of theorem 2.

\begin{theo*}[Part 2]
Consider the following numbers:
\begin{itemize}
     \item $(K^{\nu'}_{\lambda^2})^{+} = \#\{ \widetilde{Q}' \text{ tableau of shape } \nu' \text{ and content } \lambda^2 \ | \ \text{sign}_c(\widetilde{Q})=1 \}$ 
     \item $(K^{\nu'}_{\lambda^2})^{-} = \#\{ \widetilde{Q}' \text{ tableau of shape } \nu' \text{ and content } \lambda^2 \ | \ \text{sign}_c(\widetilde{Q})=-1 \}$ 
\end{itemize}

Then we have: 

\vspace{-2em}
\begin{align*}
s_2[e_{\lambda}] &= \displaystyle \sum_{\nu} (K^{\nu'}_{\lambda^2})^{+} s_{\nu} \\
s_{11}[e_{\lambda}] &= \displaystyle \sum_{\nu} (K^{\nu'}_{\lambda^2})^{-} s_{\nu}
\end{align*}
\end{theo*}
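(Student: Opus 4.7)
The plan is to adapt the argument of Section~\ref{section:proofs} to the elementary basis, with $\widetilde{\text{RSK}}$ and conjugate tableaux playing the roles formerly played by $\text{RSK}$ and tableaux. Three ingredients suffice: (i) a Schur-basis description of the splitting $e_n^2 = s_2[e_n] + s_{11}[e_n]$ analogous to Proposition~\ref{thm:ShapeEvenOdd}; (ii) Proposition~\ref{thm:SymAntiSym} applied with $g_i = e_{\lambda_i}$; and (iii) the identification $\widetilde{Q_i}' = \text{Rect}(\widetilde{Q^{(i)}}{}')$ from the corollary stated just before the theorem.

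First I would derive the $e_n$ version of Proposition~\ref{thm:ShapeEvenOdd} by applying the standard involution $\omega$ to the $h_n$ formulas. Using $\omega(h_n) = e_n$, $\omega(s_2) = s_{11}$, the plethystic identity $\omega(f[g]) = (\omega f)[\omega g]$, and the shape conjugation $(2n-j,j)' = (2^j, 1^{2n-2j})$, one expresses each of $s_2[e_n]$ and $s_{11}[e_n]$ as a sum of Schur functions $s_{(2^{n-k}, 1^{2k})}$ indexed by a parity condition on $k$. A direct computation via $s_2 = \frac{1}{2}(p_1^2 + p_2)$, $s_{11} = \frac{1}{2}(p_1^2 - p_2)$, and the Schur expansion of $p_2[e_n]$ (itself derived from the known expansion of $p_2[h_n]$ via $\omega$) gives an independent check.

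With step one in hand, the proof proceeds exactly as in Section~\ref{section:proofs}. For each $i$, the shape $(2^{\lambda_i - j_i}, 1^{2j_i})$ of $\widetilde{Q_i}$ tells us, via step one, in which of $s_2[e_{\lambda_i}]$ or $s_{11}[e_{\lambda_i}]$ the copy of $s_{(2^{\lambda_i - j_i}, 1^{2j_i})}$ lies inside $e_{\lambda_i}^2$. Factoring $e_\lambda^2 = \prod_i e_{\lambda_i}^2 = \prod_i (s_2[e_{\lambda_i}] + s_{11}[e_{\lambda_i}])$ and invoking Proposition~\ref{thm:SymAntiSym}, the copy of $s_\nu$ indexed by $\widetilde{Q}$ lands in $s_2[e_\lambda]$ precisely when the number of indices $i$ contributing an antisymmetric factor has the correct parity, which is recorded by $\text{sign}_c(\widetilde{Q}) = \prod_i (-1)^{j_i}$. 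The decomposition theorem (the final statement) then follows immediately by partitioning the $K^{\nu'}_{\lambda^2}$ conjugate tableaux of shape $\nu$ and content $\lambda^2$ according to this sign and reading off the multiplicities $(K^{\nu'}_{\lambda^2})^{\pm}$.

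The main obstacle is the bookkeeping in step one. Because $\omega(f[g]) = (\omega f)[\omega g]$ swaps $s_2 \leftrightarrow s_{11}$ on the outer, the parity rule emerging for $e_n$ is not visibly identical to the one for $h_n$, and may appear to be shifted according to the parity of $n = \lambda_i$. Careful tracking through the $\omega$-conjugation and the reindexing $k = n-j$ is required to confirm that any such per-$i$ shifts cancel in the product across $i$, leaving the clean multiplicative statistic $\prod_i (-1)^{j_i}$ — this is the technical point that makes the $h_\lambda$ argument transfer cleanly and produce the claimed form of $\text{sign}_c$.
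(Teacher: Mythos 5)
Your overall architecture is exactly the paper's: establish the $e_n$ analogue of Proposition~\ref{thm:ShapeEvenOdd}, feed it into Proposition~\ref{thm:SymAntiSym} with $g_i = e_{\lambda_i}$, and use the rectification identity $\widetilde{Q_i}' = \text{Rect}(\widetilde{Q^{(i)}}{}')$ to read the per-factor shapes off $\widetilde{Q}$. The paper proves the $e_n$ analogue (Proposition~\ref{thm:ShapeEvenOdde}) by the direct computation you relegate to an ``independent check'': $e_2 = \frac{1}{2}(p_1^2 - p_2)$ together with $p_2[e_n] = \sum_{j=0}^{n}(-1)^j s_{(2^{n-j},1^{2j})}$, which yields at once that $s_{(2^{n-j},1^{2j})}$ lies in $s_2[e_n]$ exactly when $j$ is even, with no case analysis on $n$.

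Your primary route for step one, however, contains an error that you have half-noticed but misdiagnosed. The identity $\omega(f[g]) = (\omega f)[\omega g]$ holds only when $g$ is homogeneous of odd degree; for even degree one has $\omega(f[g]) = f[\omega g]$, so the outer swap $s_2 \leftrightarrow s_{11}$ occurs only for odd $n$. Tracking this together with the reindexing $j \mapsto n-j$ shows that the two parities compensate for each $i$ \emph{individually}: every factor obeys the same rule ``$j_i$ even $\Leftrightarrow$ the copy lies in $s_2[e_{\lambda_i}]$''. This per-factor uniformity is not optional. Proposition~\ref{thm:SymAntiSym} assigns the copy of $s_\nu$ to $s_2[e_\lambda]$ or $s_{11}[e_\lambda]$ according to the parity of the number of indices contributing an antisymmetric factor; if the per-factor rule were genuinely shifted for odd (or even) $\lambda_i$, that set would no longer be $\{ i \ | \ j_i \text{ odd} \}$ and $\prod_i(-1)^{j_i}$ would not compute the correct parity --- no cancellation ``in the product across $i$'' can rescue it. So either carry out the $\omega$ bookkeeping with the degree-dependent rule, or simply use the direct $p_2[e_n]$ computation; with step one so established, the remainder of your argument coincides with the paper's proof.
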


The proof that the sign of each $Q_i$ is well-defined rests on the following result analogous to that seen in section~\ref{section:proofs}, also introduced initially by Littlewood.

\begin{prop}
The plethysms $s_2[e_n]$, $s_{11}[e_n]$ in $e_n^2$ decompose into the Schur basis as 
\[
s_{2}[e_n] = h_{2}[e_n] = \sum_{j=0}^{\lfloor \frac{n}{2} \rfloor} s_{(2^{n-2j},1^{4j})},
\]

\[
s_{11}[e_n] = e_{2}[e_n] = \sum_{j=0}^{\lfloor \frac{n-1}{2} \rfloor} s_{(2^{n-(2j+1)},1^{2(2j+1)})}.
\]
\label{thm:ShapeEvenOdde}
\end{prop}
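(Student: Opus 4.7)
The plan is to parallel the proof of Proposition~\ref{thm:ShapeEvenOdd}. Using $h_2 = \tfrac{1}{2}(p_1^2 + p_2)$, $e_2 = \tfrac{1}{2}(p_1^2 - p_2)$, the plethysm rules, and $p_1[e_n] = e_n$, I would first observe that
\[
h_2[e_n] = \tfrac{1}{2}\bigl(e_n^2 + p_2[e_n]\bigr) \quad \text{and} \quad e_2[e_n] = \tfrac{1}{2}\bigl(e_n^2 - p_2[e_n]\bigr).
\]
By Proposition~\ref{Power}, $e_n^2 = \sum_{k=0}^n s_{(2^{n-k}, 1^{2k})}$, so the entire proposition reduces to proving the signed companion identity
\[
p_2[e_n] = \sum_{k=0}^n (-1)^k s_{(2^{n-k}, 1^{2k})}.
\]
Indeed, adding or subtracting this from $e_n^2$ and dividing by $2$ keeps exactly the shapes with $k$ even (resp.\ odd), and re-indexing with $k = 2j$ (resp.\ $k = 2j+1$) yields the two claimed decompositions.

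To prove this signed identity, the cleanest route is to transport the known formula $p_2[h_n] = \sum_{i=0}^n (-1)^i s_{(2n-i,i)}$ through the involution $\omega$. Expanding $h_n = \sum_{\lambda \vdash n} z_\lambda^{-1} p_\lambda$ and using multiplicativity of plethysm together with $p_2[p_k] = p_{2k}$, we have $p_2[h_n] = \sum_{\lambda \vdash n} z_\lambda^{-1} p_{2\lambda}$. Since $\omega(p_{2\lambda}) = (-1)^{2n - \ell(\lambda)} p_{2\lambda} = (-1)^{\ell(\lambda)} p_{2\lambda}$, and since the power sum expansion $e_n = \sum_\lambda z_\lambda^{-1} (-1)^{n-\ell(\lambda)} p_\lambda$ gives $p_2[e_n] = \sum_\lambda z_\lambda^{-1} (-1)^{n-\ell(\lambda)} p_{2\lambda}$, a short calculation in the power sum basis produces
\[
\omega\bigl(p_2[h_n]\bigr) = \sum_\lambda z_\lambda^{-1} (-1)^{\ell(\lambda)} p_{2\lambda} = (-1)^n p_2[e_n].
\]
On the other hand, applying $\omega$ directly to $p_2[h_n] = \sum_i (-1)^i s_{(2n-i,i)}$ and using $\omega(s_\mu) = s_{\mu'}$ together with the conjugation $(2n-i,i)' = (2^i, 1^{2n-2i})$ gives
\[
\omega\bigl(p_2[h_n]\bigr) = \sum_{i=0}^n (-1)^i s_{(2^i, 1^{2n-2i})}.
\]
Equating the two expressions for $\omega(p_2[h_n])$ and substituting $k = n-i$ proves the desired identity for $p_2[e_n]$.

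The main obstacle is the sign $(-1)^n$ in $\omega(p_2[h_n]) = (-1)^n p_2[e_n]$; this is what prevents the naive $\omega$-transport $\omega(s_2[h_n]) \stackrel{?}{=} s_2[e_n]$ from holding when $n$ is odd, and forces us to work at the level of $p_2$ rather than the full plethysm. Once the sign identity is in hand through the short power sum computation above, the remainder of the proof is pure bookkeeping parallel to the $h_n$ case.
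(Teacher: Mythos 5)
Your proposal is correct, and its skeleton is exactly the paper's: write $h_2=\tfrac12(p_1^2+p_2)$ and $e_2=\tfrac12(p_1^2-p_2)$, reduce everything to the signed identity $p_2[e_n]=\sum_{k=0}^n(-1)^k s_{(2^{n-k},1^{2k})}$, and then split $e_n^2=\sum_k s_{(2^{n-k},1^{2k})}$ into the even and odd $k$ parts. The one genuine difference is that the paper simply cites this $p_2[e_n]$ expansion from Macdonald, whereas you derive it from the already-used formula $p_2[h_n]=\sum_i(-1)^i s_{(2n-i,i)}$ by transporting through $\omega$; your power-sum computation giving $\omega(p_2[h_n])=(-1)^n p_2[e_n]$ is correct, as is the conjugation $(2n-i,i)'=(2^i,1^{2n-2i})$ and the reindexing $k=n-i$ that absorbs the sign $(-1)^n$. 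This buys a self-contained argument resting on only one cited expansion rather than two, at the cost of a short extra calculation; your remark that the $(-1)^n$ obstructs the naive transport $\omega(s_2[h_n])=s_2[e_n]$ for odd $n$ is a worthwhile observation that the paper does not make.
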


\begin{proof}
The proof is very similar as the one in section~\ref{section:proofs} and relies on the following formulas also proven in MacDonald \cite{Macdonald} :
\[
p_2[e_n] = \displaystyle \sum_{j=0}^n (-1)^j s_{2^{n-j},1^{2j}}
\]
\[
e_2 = \frac{1}{2}p_1^2 - \frac{1}{2}p_2
\]
\end{proof}

\begin{proof}[Proof of part 2 of Theorem 2]
As seen in \ref{thm:ShapeEvenOdde}, the shape $(2^{\lambda_i-j_i},1^{2j_i})$ of the $\widetilde{Q_i}$ and the (conjugate) sign $(-1)^{j_i}$ determine in which plethysm the associated Schur function $s_{(2^{\lambda_i-j_i},1^{2j_i})}$ contributes to in $e_{\lambda_i}^2$.

For a given conjugate tableau $\widetilde{Q}$ of shape $\nu$ and content $\lambda^2$, this holds for each $\widetilde{Q_i}$, and 
$\widetilde{Q}$ then encodes that the associated Schur function $s_\nu$ appears in the product $s_{(2^{\lambda_1-j_1},1^{2j_1})} s_{(2^{\lambda_2-j_2},1^{2j_2})} \ldots s_{(2^{\lambda_\ell-j_\ell},1^{2j_\ell})}$ in $e_\lambda^2 = e_{\lambda_1}^2e_{\lambda_2}^2 \ldots e_{\lambda_\ell}^2 = \prod_{i=1}^\ell (s_2[e_{\lambda_i}] + s_{11}[e_{\lambda_i}])$. \\

Let the set $I = \{ \ i \ | \  j_i \text{ odd }   \}$ record the anti-symmetric parts that have been picked. 
The (conjugate) sign of $\widetilde{Q}$ relies only on the parity of the cardinality of $I$ as before :  $\text{sign}_c(\widetilde{Q}) = \prod_i \text{sign}_c(\widetilde{Q_i}) = \prod_i (-1)^{j_i} = (-1)^{|I|}$.
\end{proof}

As in section 3, we can generalize these ideas to the product $s_{\mu}e_{\lambda^2}$:

\begin{cor}
Let $\widetilde{Q}$ be a conjugate tableau of shape $\nu/\mu$ and content $\lambda^2$, for $\nu\vdash 2|\lambda| + |\mu|$, and fixed partitions $\mu$ and $\lambda = (\lambda_1, \ldots, \lambda_\ell)$. Let $\widetilde{Q^{(i)}}$ the subtableau of $\widetilde{Q}$ with entries $2i-1$ and $2i$, for $1\leq i \leq \ell$ and $\widetilde{Q_i} = (\text{Rect}(\widetilde{Q^{(i)}}'))'$ of shape $(2^{\lambda_i-j_i}, 1^{2j_i})$. Define the (conjugate) sign of $\widetilde{Q}$ to be $\text{sign}_c(\widetilde{Q}) = \prod_{i=1}^{\ell} (-1)^{j_i}$.\\

The copy of $s_\nu$ indexed by $\widetilde{Q}$ in $s_{\mu}e_\lambda^2$ lies in $s_{\mu}(s_2[e_\lambda])$ if $\text{sign}_c(\widetilde{Q}) = 1$, and in $s_{\mu}(s_{11}[e_\lambda])$ if $\text{sign}_c(\widetilde{Q}) = -1$.
\end{cor}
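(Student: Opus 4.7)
The plan is to mirror the proof of the analogous $h_\lambda$ corollary at the end of Section 3, substituting the elementary versions of each ingredient. First, I would iteratively apply the column Pieri rule to expand
\[
s_\mu e_\lambda^2 = s_\mu \cdot e_{\lambda_1} e_{\lambda_1} e_{\lambda_2} e_{\lambda_2} \cdots e_{\lambda_\ell} e_{\lambda_\ell} = \sum_{\nu \vdash 2|\lambda|+|\mu|} K_{\lambda^2}^{\nu'/\mu'} \, s_\nu,
\]
where $K_{\lambda^2}^{\nu'/\mu'}$ counts tableaux of skew shape $\nu'/\mu'$ and content $\lambda^2$. Conjugating, the copies of $s_\nu$ are indexed by conjugate tableaux $\widetilde{Q}$ of skew shape $\nu/\mu$ and content $\lambda^2$, exactly as in the statement.

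Next, for each such $\widetilde{Q}$, I would extract the subtableaux $\widetilde{Q^{(i)}}$ with entries $2i-1$ and $2i$ and form $\widetilde{Q_i} = (\text{Rect}(\widetilde{Q^{(i)}}'))'$. The skew analogue of the corollary established in Section 4 (itself a mild adaptation of Fulton, Ch.~5) ensures that $\widetilde{Q_i}$ has shape $(2^{\lambda_i-j_i}, 1^{2j_i})$ for some $0 \le j_i \le \lambda_i$. By Proposition~3.6, the corresponding copy of $s_{(2^{\lambda_i-j_i},1^{2j_i})}$ in $e_{\lambda_i}^2$ lies in $s_2[e_{\lambda_i}]$ when $j_i$ is even and in $s_{11}[e_{\lambda_i}]$ when $j_i$ is odd. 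Thus $\widetilde{Q}$ canonically selects one term in the expansion
\[
s_\mu e_\lambda^2 = s_\mu \cdot \prod_{i=1}^\ell \bigl(s_2[e_{\lambda_i}] + s_{11}[e_{\lambda_i}]\bigr),
\]
where the chosen plethystic factor at position $i$ is symmetric or antisymmetric according to the parity of $j_i$.

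Finally, applying Proposition~3.5 (\textbf{SymAntiSym}) with $g_i = e_{\lambda_i}$, the subsets $I \subseteq \{1,\ldots,\ell\}$ of even cardinality assemble into $s_2[e_\lambda]$ and those of odd cardinality into $s_{11}[e_\lambda]$; multiplying by $s_\mu$ distributes these sums to $s_\mu(s_2[e_\lambda])$ and $s_\mu(s_{11}[e_\lambda])$ respectively. Since $\text{sign}_c(\widetilde{Q}) = (-1)^{|I(\widetilde{Q})|}$ where $I(\widetilde{Q}) = \{i : j_i \text{ odd}\}$, this matches the dichotomy claimed. The main subtlety, as in the $h_\lambda$ case, will be verifying that the two indexings of irreducibles of $s_\mu e_\lambda^2$ agree on a given $\widetilde{Q}$: the one arising from iterated column Pieri on the full product, and the one obtained by first decomposing each $e_{\lambda_i}^2$ via its $\widetilde{Q_i}$ and then multiplying the resulting Schur factors against $s_\mu$. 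This should follow because multiplication by $s_\mu$ (and by the remaining Schur factors) acts on each already-Schur-expanded term by pure Pieri skewing, so the indexing by the entries of $\widetilde{Q}$ outside the local data $\{\widetilde{Q_i}\}$ introduces no additional ambiguity.
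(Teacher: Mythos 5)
Your proposal is correct and follows essentially the same route the paper intends: the paper gives no separate proof for this corollary, saying only ``as in section 3, we can generalize these ideas,'' and your argument is a faithful expansion of exactly that — iterated (column) Pieri rule giving the skew Kostka expansion $\sum_\nu K_{\lambda^2}^{\nu'/\mu'} s_\nu$, the shape of each $\widetilde{Q_i}$ locating the factor in $s_2[e_{\lambda_i}]$ or $s_{11}[e_{\lambda_i}]$, and the SymAntiSym proposition assembling the parity of $|I|$ into $\mathrm{sign}_c(\widetilde{Q})$. The ``main subtlety'' you flag at the end is precisely what the paper's adaptation of Fulton's Chapter 5 result to $\widetilde{\text{RSK}}$ is meant to handle, so nothing is missing.
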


\section{What's next?}

We are of course interested in investigating higher plethysms. 

\subsection{Generalizing this approach}

To be able to generalize our approach, we need to use some known results, which are implicit in \cite{Macdonald}.

\begin{prop}\label{PlethsymDecomposition}
For any symmetric function $g$,
\[
g^n = \displaystyle \sum_{\mu \vdash n} f^{\mu} s_{\mu}[g],
\]
where $f^{\mu}$ is the number of standard tableaux of shape $\mu$.
\end{prop}


\begin{proof}
We have that $p_1[g] = g$ for any symmetric function $g$. Moreover, $p_1^n = f^{\mu}s_{\mu}$, which follows directly from the RSK algorithm: the indices picked in each copy of $p_1$ in $p_1^n = (x_1 + x_2 + x_3 +\ldots)^n$ form a word of length $n$, which are in bijection with pairs of tableaux of the same shape $\mu\vdash n$, with the recording tableau being standard. 

Using plethysm rules, we have:
\begin{align*}
g^n     & = (p_1[g])^n \\
        & = p_1^n[g] \\
        & = \displaystyle \sum_{\mu \vdash n} f^{\mu} s_{\mu}[g]
\end{align*}

\end{proof}

We can also use the Pieri rule recursively (or RSK/$\widetilde{RSK}$) to get the decomposition of $h_\lambda^n$ and $e_\lambda^n$ into the basis of Schur functions:

\begin{prop}
$(h_{\lambda})^n = h_{\lambda^n} = \displaystyle \sum_{\nu \vdash n|\lambda|} K_{\lambda^n}^{\nu} s_{\nu}$, and $(e_{\lambda})^n = e_{\lambda^n} = \displaystyle \sum_{\nu \vdash n|\lambda|} K_{\lambda^n}^{\nu'} s_{\nu}$,
where the Kostka numbers $K_{\lambda^n}^\nu$ count the number of tableaux of shape $\nu$ and content $\lambda^n$.
\end{prop}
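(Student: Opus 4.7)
The plan is to reduce both statements to an iterated application of the Pieri rule, mirroring the $n=2$ argument given earlier (Proposition~\ref{Power}). First I would observe that, by the multiplicative definitions recalled in Section~2,
\[
h_\lambda^n = \bigl(h_{\lambda_1}h_{\lambda_2}\cdots h_{\lambda_\ell}\bigr)^n = h_{\lambda_1}^n h_{\lambda_2}^n \cdots h_{\lambda_\ell}^n = h_{\lambda^n},
\]
where $\lambda^n$ denotes the partition obtained by repeating each part $\lambda_i$ exactly $n$ times (sorted weakly decreasingly). The analogous identity $e_\lambda^n = e_{\lambda^n}$ holds for the same reason. This reduces the proposition to expanding $h_\mu$ (respectively $e_\mu$) in the Schur basis for an arbitrary partition $\mu$.

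Next I would expand $h_\mu$ by iterating the Pieri rule. Writing $\mu=(\mu_1,\ldots,\mu_k)$, start from $h_{\mu_1}=s_{(\mu_1)}$ and multiply successively by $h_{\mu_2},\ldots,h_{\mu_k}$. By the Pieri rule, each multiplication by $h_{\mu_j}$ adds a horizontal strip of size $\mu_j$ to the current shape; I would record the new cells with the entry $j$. A straightforward induction on $k$ then shows that the coefficient of $s_\nu$ in $h_\mu$ equals the number of chains $\emptyset = \nu^{(0)} \subset \nu^{(1)} \subset \cdots \subset \nu^{(k)} = \nu$ with $\nu^{(j)}/\nu^{(j-1)}$ a horizontal strip of size $\mu_j$. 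Reading such a chain as the filling where cells added at step $j$ carry the value $j$ gives precisely a semistandard tableau of shape $\nu$ and content $\mu$, so the coefficient is $K_\mu^\nu$. Specializing $\mu=\lambda^n$ yields the first formula.

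The elementary case is handled identically, replacing $h_m$ by $e_m$ and horizontal strips by vertical strips throughout. The iterated conjugate Pieri rule expresses $e_\mu$ as a sum over chains of vertical strips, and transposing each intermediate shape bijects such chains with semistandard tableaux of shape $\nu'$ and content $\mu$. Applied to $\mu=\lambda^n$, this gives $e_\lambda^n = \sum_{\nu} K_{\lambda^n}^{\nu'} s_\nu$.

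There is no real obstacle here: the whole content lies in the iterated Pieri rule and the bijection between saturated chains of (horizontal or vertical) strips and semistandard tableaux, both of which are standard and already used implicitly in the $n=2$ case. The only bookkeeping point worth emphasizing is that the order in which the parts of $\mu=\lambda^n$ are multiplied does not affect the final coefficient, which follows from commutativity of $\Lambda$; this justifies grouping the factors so that repeated blocks $h_{\lambda_i}^n$ correspond to entries $n(i-1)+1,\ldots,ni$ in the resulting tableaux, compatibly with the conventions of Sections~3 and~\ref{section:e}.
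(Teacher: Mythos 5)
Your argument is correct and is exactly the route the paper takes: it reduces $(h_\lambda)^n$ to $h_{\lambda^n}$ by multiplicativity and then iterates the Pieri rule, identifying chains of horizontal (resp.\ vertical) strips with semistandard tableaux of shape $\nu$ (resp.\ $\nu'$) and content $\lambda^n$, just as the paper does for Proposition~\ref{Power} and indicates for this $n$-fold generalization. No gaps; the remark that the order of multiplication is immaterial is a sensible bookkeeping point consistent with the paper's conventions.
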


All of the above constructions 
can be generalized to $n>2$.\\

\newpage
For a tableau $Q$ of shape $\nu$ and content $\lambda^n$, we can consider the subtableaux $Q^{(i)}$ of $Q$ with entries $ni-(n-1), ni-(n-2), \ldots, ni$, for $1\leq i \leq \ell(\lambda)$, and their rectifications $Q_i$. 
If $\nu_i = shape(Q_i)$, and $\nu = shape(Q)$, then the Schur function $s_\nu$ occurs in the product $\prod_i s_{\nu_i}$, each $s_{\nu_i}$ indexed by $Q_i$ in $h_{\lambda_i}^2$.
Same thing goes to describe the tableaux counted by $K^{\nu'}_{\lambda^n}$ using $\widetilde{RSK}$. \\

We would like to identify a statistic on the tableaux $Q_i$ and $Q$ to determine to which plethysms of $h_{\lambda_i}^n$ and $h_\lambda^n$ the associated Schur functions $s_{\nu_i}$ and $s_\nu$ contribute, and similarly for the plethysms of $e_{\lambda_i}^n$ and $e_\lambda^n$. However, it proves to be extremely difficult.\\ 

For $n = 3$, we have that $g^3 = s_3[g]+ 2s_{21}[g]+s_{111}[g]$. The changing multiplicities already make things a bit complicated. For $h_n^3$, and $e_n^3$, we have closed formulas for the number of copies of $s_\nu$ that lie in $s_3[h_n], s_{111}[h_n]$ (resp. $s_3[e_n]$ and $s_{111}[e_n]$), studied by Chen \cite{Chen} and Thrall \cite{Thrall}. The leftover Schur functions would then be dispatched into the two copies of $s_{21}[h_n]$ (resp. $s_{21}[e_n]$). This can help pinpoint the right statistic to consider. We would like to refine this by understanding more precisely which copies of $s_\nu$ lie in each plethysm, eventually also distinguishing between the two copies of $s_{21}[h_n]$ or $s_{21}[e_n]$, but we are far from this result.\\

In the next section, we describe another strategy which might prove successful and which allows to recover our results for $n = 2$.

\subsection{Generalizing using ribbon tableaux}

The title of this article refers explicitly to that of Carré and Leclerc \cite{CarreLeclerc}, where they describe the product of two Schur functions in terms of domino tableaux.
They showed that the number of Yamanouchi domino tableaux of a certain shape $I$ and content $\lambda$ give the multiplicity $c_{\mu \nu}^\lambda$ of $s_\lambda$ in $s_\mu s_\nu$. When $\mu = \nu$, then $I = (2\mu)^2 = (2\mu_1,2\mu_1, 2\mu_2, 2\mu_2, \ldots, 2\mu_\ell, 2\mu_\ell)$. A Yamanouchi domino tableau has a Yamanouchi (or reverse lattice) reading word: the content of each suffix is a partition. Domino tableaux are read off by scanning rows left to right and bottom to top, reading dominoes on their first scanning.\\ 

\newpage
For a square $s_\mu
^2$, Carré and Leclerc showed that the parity of the cospin of the Yamanouchi domino tableaux determine whether the associated Schur function $s_\lambda$ contributes to $s_2[s_\mu]$ 
or $s_{11}[s_\mu]$, with the cospin equal the number of horizontal dominoes divided by two. 
Note that the cospin is more generally defined as the maximal number of vertical dominoes that can pave $I$ minus the number of vertical dominoes in the domino tableau, divided by two. 

For $h_n^2 = s_n^2$, $I=(2n,2n)$. The only pavings and fillings of $I$ that result in Yamanouchi domino tableaux consist of a sequence of $2n-2j$ vertical dominos filled with ones, followed by $j$ pairs of stacked horizontal dominoes, filled with a one for the top one, and a two for the bottom one. The weight of such a Yamanouchi tableau is then $\mu = (2n-j,j)$, and its cospin is $j$. By the result of Carré and Leclerc, there is a unique copy of $s_\mu$ in the decomposition of $h_n^2 = s_n^2$, and it contributes to the symmetric part of the square if $j$ is even, and to its anti-symmetric part if $j$ is odd. 

For $e_n^2 = s_{1^n}^2$, $I=(2^{2n})$. The only pavings and fillings of $I$ that result in Yamanouchi domino tableaux, built top to bottom, consists of a sequence of $n-j$ couples of vertical dominoes filled with ones, then two's, etc. till $n-j$, followed by $2j$ horizontal dominoes, filled with $j+1$, etc. till $n+j$. The weight of such a Yamanouchi tableau is then $\mu = (2^{n-j},1^{2j})$, and its cospin is $j$. Therefore, by the result of Carré and Leclerc, there is a unique copy of $s_\mu$ in the decomposition of $e_n^2 = s_{1^n}^2$, and it contributes to the symmetric part of the square if $j$ is even, and to its anti-symmetric part if $j$ is odd.\\

The Yamanouchi domino tableaux are illustrated below.\\

\hspace{-6em}
\Yboxdim{0.39cm}
\scriptsize{
\begin{tabular}{ccccc}
\gyoung(|21|21|21|21|2<:>|21|21|21|21:,:)& 
\gyoung(|21|21|21|21|2<:>|21|21_21:,:::::::_22)& $\ldots$& 
\gyoung(|21|21_21|2<:>_21_21:,::_22:_22_22)& 
\gyoung(_21_21|2<:>_21_21:,_22_22:_22_22)\\
& & & & \\
$\mu = (2n)$ & $\mu = (2n-1,1)$ & $\ldots$ & $\mu = (n+1,n-1)$ & $\mu = (n,n)$. \\
& & & & \\
\gyoung(|21|21,|22|22,_2<:>,|2<{\tiny n-1}>|2<{\tiny n-1}>,|2<n>|2<n>,:,:,:,:) & \gyoung(|21|21,|22|22,_2<\ldots>,|2<{\tiny n-1}>|2<{\tiny n-1}>,_2<n>,_2<n+1>,:,:,:)
& $\ldots$ &  \gyoung(|21|21,_22,_23,_2<\ldots>,_2<2n-4>,_2<2n-3>,_2<2n-2>,_2<2n-1>,:) &
\gyoung(_21,_22,_23,_24,_2<\ldots>,_2<2n-3>,_2<2n-2>,_2<2n-1>,_2<2n>)\\
&&&&\\
$\mu = (2^{n})$ & $\mu =  (2^{n-1},1^2)$ & $\ldots$ & $\mu = (2^1,1^{2(n-1)})$ & $\mu = (1^{2n})$\\
&&&&\\
\small $cospin = 0$ & \small $cospin = 1$ & \small $\ldots$ & \small $cospin = n-1$ & \small $cospin = n$ \\
\end{tabular}
}
\normalsize

\vspace{1em}

This gives an alternate proof of proposition~\ref{thm:ShapeEvenOdd}.\\

This could be generalized to higher plethysms. We know, thanks to Stanton and White \cite{StantonWhite}, that $r$-tuples of tableaux are in bijection with $r$-ribbon tableaux (domino tableaux are $2$-ribbon tableaux). 
In particular, $(m)^n$-tuples of row (resp. column) tableaux are in bijection with $r$-ribbon tableaux of shape $(m\cdot n)^n$ (resp. $(n)^{m\cdot n}$). Could we then define a Yamanouchi-like property and a cospin-like statistic that could help solve the plethystic decomposition of $h_m^n$ and $e_m^n$? Could this be extended even to $h_\lambda^n$, $e_\lambda^n$ or $s_\lambda^n$?

\section*{Acknowledgements}
The authors are grateful to Franco Saliola for his support. FMG received support from NSERC.

\bibliographystyle{alpha}
\bibliography{main.bib}

\begin{thebibliography}{Knu70}

\bibitem[Che82]{Chen}
Y.-M. Chen.
\newblock {\em Combinatorial Algorithms for Plethysm}.
\newblock PhD thesis, University of California, San Diego, 1982.

\bibitem[CL93]{CarreLeclerc}
C.~Carré and B.~Leclerc.
\newblock Domino tableaux, functions and plethysms. (tableaux de dominos,
  fonctions et pléthysmes.).
\newblock {\em Séminaire Lotharingien de Combinatoire [electronic only]}, 31,
  1993.

\bibitem[FI20]{FischerIkenmeyer}
N.~Fischer and C.~Ikenmeyer.
\newblock The computational complexity of plethysm coefficients.
\newblock {\em Computational Complexity}, 29(2), 2020.

\bibitem[Ful96]{Fulton}
W.~Fulton.
\newblock {\em Young Tableaux: With Applications to Representation Theory and
  Geometry}.
\newblock London Mathematical Society Student Texts. Cambridge University
  Press, 1996.

\bibitem[Knu70]{Knuth}
D.E. Knuth.
\newblock {Permutations, matrices, and generalized Young tableaux.}
\newblock {\em Pacific Journal of Mathematics}, 34(3):709 -- 727, 1970.

\bibitem[Kra05]{Krattenthaler}
C.~Krattenthaler.
\newblock Growth diagrams, and increasing and decreasing chains in fillings of
  ferrers shapes.
\newblock {\em Advances in Applied Mathematics}, 37:404--431, 12 2005.

\bibitem[Lit36]{Littlewood}
D.~E. Littlewood.
\newblock Polynomial concomitants and invariant matrices.
\newblock {\em Journal of the London Mathematical Society}, s1-11(1):49--55,
  1936.

\bibitem[Lit50]{Littlewood2}
D.~E. Littlewood.
\newblock {\em The Theory of Group Characters}.
\newblock Oxford University Press, 2nd edition, 1950.

\bibitem[LS81]{LascouxSchutzenberger}
A.~Lascoux and M.-P. Sch{\"u}tzenberger.
\newblock Le monoïde plaxique.
\newblock {\em Noncommutative structures in algebra and geometric combinatorics
  (Naples, 1978)}, 109:129--156, 1981.

\bibitem[Mac98]{Macdonald}
I.G. Macdonald.
\newblock {\em Symmetric Functions and Hall Polynomials}.
\newblock Oxford classic texts in the physical sciences. Clarendon Press, 1998.

\bibitem[Mul12]{Mulmuley}
K.D. Mulmuley.
\newblock The gct program toward the p vs. np problem.
\newblock {\em Commun. ACM}, 55(6):98–107, jun 2012.

\bibitem[Sag01]{Sagan}
B.~Sagan.
\newblock {\em The Symmetric Group: Representations, Combinatorial Algorithms,
  and Symmetric Functions}.
\newblock Graduate Texts in Mathematics. Springer, 2001.

\bibitem[SW85]{StantonWhite}
D.W. Stanton and D.E. White.
\newblock A schensted algorithm for rim hook tableaux.
\newblock {\em Journal of Combinatorial Theory, Series A}, 40(2):211--247,
  1985.

\bibitem[Thr42]{Thrall}
R.~M. Thrall.
\newblock On symmetrized kronecker powers and the structure of the free lie
  ring.
\newblock {\em American Journal of Mathematics}, 64(1):371--388, 1942.

\end{thebibliography}

\end{document}